\documentclass[11pt,leqno]{amsart}
\usepackage[letterpaper, total={6in, 8in}, left=1.25in, top=1.5in]{geometry}
\usepackage{amsmath}
\usepackage{amsfonts}
\usepackage{amssymb}
\usepackage{graphicx}
\usepackage{color}
\usepackage{hyperref}
\usepackage{xcolor}

\newtheorem{theorem}{Theorem}[section]
\newtheorem*{theorem*}{Theorem}
\newtheorem{lemma}{Lemma}[section]

\newtheorem{proposition}{Proposition}[section]

\newtheorem{definition}[theorem]{Definition}

\newtheorem{remark}[theorem]{Remark}

\newcommand{\avint}{{\mathop{\mkern2mu\overline{\mkern-2mu\int\mkern-2mu}\mkern2mu}}}

\def\l{\lambda}

\def\p{\partial}

\def\R{\mathbb{R}}

\def\div{\operatorname{div}}

\def \p {\partial}

\def\avint{\mathop{\bar{\int}}}

\numberwithin{equation}{section}

\begin{document}

\title[The Faber-Krahn inequality]{The Faber-Krahn inequality for $p$-Hermite operators}

\author{xiaomei Sun} 
\address{College of informatics, Huazhong Agricultural University, 430070, Wuhan, China}
\email{xmsunn@mail.hzau.edu.cn}

\author{Kui Wang} 
\address{School of Mathematical Sciences, Soochow University, Suzhou, 215006, China}
\email{kuiwang@suda.edu.cn}

\author{Anqiang Zhu} 
\address{School of Mathematics and Statistics, Wuhan University, Wuhan 430072, China}
\email{aqzhu.math@whu.edu.cn}

\subjclass[2010]{35P15, 35J25}

\keywords{Faber-Krahn inequality, Robin boundary condition, Gaussian measure, Hermite operator}

\begin{abstract}
We prove a Faber-Krahn  inequality for the first eigenvalue of the $p$-Hermite operator (the weighted $p$-Laplacian with Gaussian weight) on Lipschitz domains in $\R^n$ under  Robin boundary conditions with positive Robin parameter. The main result states that, among all domains of given Gaussian measure, the first eigenvalue is minimized by a half-space, and equality holds only for half-spaces. This extends the classical Faber-Krahn inequalities for the $p$-Laplacian \cite{BucurCV} to the $p$-Hermite operator and generalizes the linear case \cite{ChiacchioMathann}  to the full nonlinear regime $p>1$.
\end{abstract}
\maketitle

\section{Introduction}
Let $d\gamma=\phi (x)\,dx$ denote the normalized Gaussian measure on $\R^n$, with density 
\begin{equation*}
\phi (x)=(2\pi)^{-n/2}\exp\left(-|x|^2/2\right).
\end{equation*}
Let $\Omega \subset \R^n$ be a domain with Lipschitz boundary, and  let $\nu$ denote the unit outward normal to $\p \Omega$. We consider the following eigenvalue problem 
 for the \emph{$p$-Hermite operator} (weighted $p$-Laplacian with Gaussian weight) with Robin boundary conditions:
\begin{align}\label{1.2}
\begin{cases}
-\div\bigl(\phi (x)|\nabla u(x)|^{p-2}\nabla u(x)\bigr)=\lambda(\Omega)\,\phi (x)|u|^{p-2}u, &\quad x\in \Omega,\\[4pt]
|\nabla u(x)|^{p-2}\nabla u(x)\cdot\nu+\beta\,|u(x)|^{p-2}u(x)=0, & \quad x\in \partial\Omega,
\end{cases}
\end{align}
where $p>1$ and $\beta\in \R$ is the Robin parameter. The first eigenvalue $\lambda_1(\Omega)$ admits the variational characterization
\begin{align}\label{variational formula}
\lambda_1(\Omega)=\inf_{u\in W^{1,p}(\Omega,\phi )\setminus\{0\}}
\frac{\int_{\Omega}|\nabla u|^p\phi \,dx
+\beta\int_{\partial\Omega}|u|^p\phi \,dA}
{\int_{\Omega}|u|^p\phi \,dx},
\end{align}
where $dA$ is the induced  surface measure on $\p \Omega$.
The eigenvalue problem \eqref{1.2} is understood in the weak sense: $\lambda$ is an eigenvalue if there exists a nonzero function $u\in W^{1,p}(\Omega,\phi )$, called an eigenfunction, such that 
\begin{align}\label{weak form}
    \int_{\Omega}|\nabla u|^{p-2}\nabla u\cdot \nabla v \phi (x)dx+\int_{\partial \Omega}\beta |u|^{p-2}uv\phi (x)dx=\lambda\int_{\Omega}|u|^{p-2}uv\phi (x)dx,
\end{align}
for all $v\in W^{1,p}(\Omega,\phi )$.

The classical Faber-Krahn inequality asserts that, among sets of fixed Lebesgue measure, the ball minimizes the first Dirichlet eigenvalue of the Laplace operator. For Robin boundary conditions,  the corresponding isoperimetric result was first obtained in the planar case by Bossel~\cite{BosselCR} and later extended to arbitrary dimensions by Daners~\cite{DanersMathann}. For the $p$-Laplace operator with Robin conditions, Dai-Fu~\cite{Daiqiuyi} and Bucur-Daners~\cite{BucurCV} independently established the Faber-Krahn inequality, showing that the ball is still the minimizer for Lipschitz or smooth domains. For more works on Faber-Krahn inequalities for Robin eigenvalues, see, for example \cite{BFK17, BG15, ChendaguangJDE,  CLW26, CWZ26} and the references therein.
In the Gaussian space $(\mathbb{R}^n,\phi)$, the geometric picture is very different. The celebrated Gaussian isoperimetric inequality asserts that half-spaces minimize the Gaussian perimeter among sets of fixed Gaussian measure, and they are unique up to rotation. For the Hermite operator (the linear case $p=2$) with Dirichlet boundary conditions, it is known that half-spaces minimize the first eigenvalue, in accordance with the Gaussian isoperimetric optimality (\cite{BettaZAMP,Ehrhard84,Sudakov}). For Neumann conditions, the situation is more subtle: half-spaces do not maximize the first nontrivial Neumann eigenvalue, contrary to the Euclidean Szeg\"{o}-Weinberger theorem, see~\cite{ChiacchioNeumann}.

It is therefore natural to ask which set minimizes the first eigenvalue of the Hermite operator with Robin boundary conditions in the Gaussian setting. This question was answered by Chiacchio and Gavitone~\cite{ChiacchioMathann} for the linear Hermite operator ($p=2$), who proved that the minimizer is a half-space. However, the problem for the nonlinear $p$-Hermite operator ($p\neq 2$) remained open. The nonlinearity of the $p$-Laplacian introduces new difficulties: the eigenfunction lacks sufficient regularity,  and the monotonicity properties of auxiliary functions require a careful one-dimensional analysis. Moreover, the domain $\Omega$ may be unbounded, and the first eigenfunction need not belong to $L^\infty(\Omega)$, which prevents a direct adaptation of the Euclidean  arguments. As in the linear case, a detailed study of the one-dimensional problem on half-lines is essential, together with a suitable modification of the level-set representation formula for the first eigenvalue.

In the present paper, we address the  nonlinear problem and prove a Faber-Krahn inequality for the $p$-Hermite operator with Robin boundary conditions. From now on,  we assume $\beta>0$ and $p>1$. Our main result is the following. 
\begin{theorem}\label{thm1}
    Suppose $\Omega\subset \R^n$ is a Lipschitz domain belonging to the class 
 $\mathcal G$ (see Definition \ref{Def 2.1}), and let $\Omega^{\#}$ be a half space with the same Gaussian measure as $\Omega$, i.e. $\gamma (\Omega^{\#})=\gamma (\Omega)$. 
Then 
    \begin{align}\label{eigenvalue comparison}
        \lambda_{1}(\Omega^{\#})\leq \lambda_{1}(\Omega).
    \end{align}
Moreover, equality holds if and only if  $\Omega$ is a half-space, up to a set of Gaussian measure zero.
\end{theorem}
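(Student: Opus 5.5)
The plan is to adapt the Bossel--Daners level-set method, in the form used by Bucur--Daners for the $p$-Laplacian and by Chiacchio--Gavitone for $p=2$ in the Gaussian setting.

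\textbf{Step 1: the half-space problem.} By rotation invariance we take $\Omega^{\#}=\{x_1>a\}$ with $\gamma(\Omega^{\#})=\gamma(\Omega)$. The first eigenfunction there is one-dimensional, $v(x)=\psi(x_1)$, with $\psi$ solving
\[
-(\phi_1|\psi'|^{p-2}\psi')'=\lambda_1(\Omega^{\#})\phi_1|\psi|^{p-2}\psi \quad\text{on } (a,\infty),
\qquad |\psi'|^{p-2}\psi'(a)=\beta\psi(a)^{p-1},
\]
where $\phi_1$ is the one-dimensional Gaussian density. I would prove, by a phase-plane or Pr\"ufer-type analysis of this ODE, the following properties:
\begin{itemize}
\item $\psi>0$ and $\psi$ is increasing, so its level sets $\{v>t\}$ are half-spaces $\{x_1>s(t)\}$;
\item the quantity $w=|\psi'|/\psi$ is monotone (nonincreasing) in $x_1$ and is bounded by $\beta^{1/(p-1)}$;
\item $\psi\in W^{1,p}(\R,\phi_1)$ with the correct behaviour at infinity.
\end{itemize}
This one-dimensional monotonicity is delicate for $p\ne2$ because of the nonlinearity, and I expect it to be where much of the technical work sits.

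\textbf{Step 2: the level-set functional.} For the positive first eigenfunction $u$ of $\Omega$ (which exists, and which we may take with $u>0$ when $\Omega\in\mathcal G$), and for any nonnegative test field $\varphi$ on $\Omega$, define
\[
H_\Omega(U_t,\varphi)=\frac{1}{\gamma(U_t)}\Bigl(\int_{\p U_t\cap\Omega}\varphi\,\phi\,dA+\int_{\p U_t\cap\p\Omega}\beta\,\phi\,dA-(p-1)\int_{U_t}\varphi^{p'}\phi\,dx\Bigr),
\]
where $U_t=\{u>t\}$ and $p'=p/(p-1)$. Taking $\varphi=|\nabla u|/u$ and using the weak form \eqref{weak form} with truncated test functions, I would show that $H_\Omega(U_t,|\nabla u|/u)=\lambda_1(\Omega)$ for a.e.\ $t$. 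Truncation is needed because $u$ may be unbounded and lack regularity; I would cut off at large levels and pass to the limit using integrability of $u$. For an arbitrary admissible $\varphi$ the Bossel--Daners argument then gives the key identity: for $\varphi=|\nabla u|/u+\eta$,
\[
H_\Omega(U_t,\varphi)-\lambda_1(\Omega)=\frac{1}{\gamma(U_t)}\Bigl(\int_{\p U_t\cap\Omega}\eta\,\phi\,dA-\int_{U_t}\bigl[(p-1)\varphi^{p'}-(p-1)(|\nabla u|/u)^{p'}\bigr]\phi\,dx\Bigr).
\]
Writing $\mathcal F(t)=\int_{U_t}\eta\,|\nabla u|/u\ldots$ via the coarea formula, the boundary integral becomes $-\tfrac{d}{dt}$ of a bulk integral of $\eta$ against $|\nabla u|\,\phi$. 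Using convexity of $s\mapsto s^{p'}$, an integration in $t$ then shows that $\inf_t H_\Omega(U_t,\varphi)\le\lambda_1(\Omega)$, unless $\eta=0$ a.e.

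\textbf{Step 3: comparison.} Transplant $w$ to $\Omega$ by Gaussian rearrangement. Set $\varphi(x)=w(s(\gamma(U_t)))$ on $\p U_t$, where $s(m)$ is the position of the half-space of Gaussian measure $m$. The sets $U_t$ are subsets of $\Omega$, and on $\p\Omega$ we have $\beta\ge\varphi^{p-1}$-type bounds because $w\le\beta^{1/(p-1)}$. Combining these with the Gaussian isoperimetric inequality $P_\gamma(U_t)\ge P_\gamma(U_t^{\#})$ and the monotonicity of $w$ from Step 1 (to compare the bulk terms, since $\gamma(U_t)\le\gamma(\Omega)$ forces the level half-spaces to lie inside $\Omega^{\#}$), we get
\[
H_\Omega(U_t,\varphi)\ge H_{\Omega^{\#}}(U_t^{\#},w)=\lambda_1(\Omega^{\#})
\]
for every $t$. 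Together with Step 2 this yields $\lambda_1(\Omega^{\#})\le\lambda_1(\Omega)$.

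\textbf{Step 4: equality.} If equality holds, then every inequality in the chain is an equality. In particular the Gaussian isoperimetric inequality is an equality for a.e.\ level set $U_t$, so each such $U_t$ is a half-space. Letting $t\to0$, $\Omega$ itself is a half-space up to a null set, and the converse direction is trivial.

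\textbf{Main obstacle.} I expect the main difficulty to be the rigorous justification of Step 2 for possibly unbounded $\Omega$ and unbounded, low-regularity $u$. The plan there is to work with truncations $\min(u,M)$ and the class $\mathcal G$ hypotheses, and to control the boundary terms via the trace inequality in $W^{1,p}(\Omega,\phi)$.
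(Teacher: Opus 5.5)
Your architecture is the paper's: one-dimensional monotonicity of a logarithmic-derivative quantity, the Bossel--Daners level-set representation of $\lambda_1(\Omega)$ and its perturbation inequality, transplantation by Gaussian rearrangement plus the Gaussian isoperimetric inequality, and the equality case via half-space level sets. As written, however, the key identities in Steps 2 and 3 fail for $p\neq 2$, because your test field carries the wrong power.

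Your $H_\Omega$ is linear in $\varphi$ on $\partial U_t\cap\Omega$ and contains $(p-1)\varphi^{p'}$ in the bulk. Testing the weak form with $u^{1-p}\min\{1,((u-t)/\varepsilon)^+\}$ and letting $\varepsilon\to 0$ gives, for a.e.\ $t$,
\[
-(p-1)\int_{U_t}\Bigl(\frac{|\nabla u|}{u}\Bigr)^{p}\phi\,dx+\int_{\{u=t\}}\Bigl(\frac{|\nabla u|}{u}\Bigr)^{p-1}\phi\,dA+\beta\int_{\{x\in\partial\Omega:\,u(x)>t\}}\phi\,dA=\lambda_1(\Omega)\,\gamma(U_t).
\]
So $H_\Omega(U_t,\varphi)=\lambda_1(\Omega)$ holds for $\varphi=(|\nabla u|/u)^{p-1}$, whose $p'$-th power is $(|\nabla u|/u)^{p}$. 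It does not hold for $\varphi=|\nabla u|/u$: with your choice the level-set term has exponent $1$ instead of $p-1$, and the bulk term has exponent $p'$ instead of $p$.

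Consequences:
\begin{itemize}
\item Your perturbation identity around $|\nabla u|/u$ is false.
\item The claim $H_{\Omega^{\#}}(U_t^{\#},w)=\lambda_1(\Omega^{\#})$ with $w=|\psi'|/\psi$ is false.
\item In Step 3, absorbing $\varphi\,P_\phi(\partial U_t\cap\partial\Omega)$ into the $\beta$-term needs $\varphi\le\beta$. Your bound $w\le\beta^{1/(p-1)}$ only gives $\varphi\le\beta^{1/(p-1)}$, which exceeds $\beta$ when, say, $\beta>1$ and $p<2$.
\end{itemize}

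The repair is to work throughout with $g=|\psi'|^{p-1}/\psi^{p-1}$ and $v=(|\nabla u|/u)^{p-1}$, as the paper does. Your Step 1 monotonicity and bound transfer, since $s\mapsto s^{p-1}$ is increasing. With $\varphi=v+\eta$ the convexity step reads $(p-1)(\varphi^{p'}-v^{p'})\ge p\,\eta\,|\nabla u|/u$. This is exactly what produces $F(t)=\int_{U_t}\eta\,(|\nabla u|/u)\,\phi\,dx$ and the weight $t^{p}$ in $\frac{d}{dt}\bigl(t^{p}F(t)\bigr)$. Incidentally, the cut-off test function above is bounded by $t^{1-p}$, so the unboundedness of $u$ needs no truncation at high levels.

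Two smaller points need tightening.

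First, the splitting $P_\phi(U_t)\le P_\phi(\partial U_t\cap\Omega)+P_\phi(\partial U_t\cap\partial\Omega)$ holds only off a countable set of levels, and the representation only for a.e.\ $t$. So Step 3 is not available for every $t$, and Step 2 should deliver a set of levels of positive measure (an essential infimum), not merely $\inf_t$.

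Second, in Step 4, the equality $\lambda_1(\Omega)=\lambda_1(\Omega^{\#})$ does not by itself make every inequality in your chain an equality, since the chain passes through an infimum in $t$. You must argue as follows: $H_\Omega(U_t,\varphi)\ge\lambda_1(\Omega)$ for a.e.\ $t$ forces $\frac{d}{dt}(t^{p}F)\le 0$; hence $t^{p}F\equiv 0$ from its limits at $0$ and $+\infty$; hence $H_\Omega(U_t,\varphi)=\lambda_1(\Omega)$ for a.e.\ $t$. Only then is the isoperimetric step an equality for a.e.\ $t$ (using $\varphi>0$).

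Finally, the bulk terms in Step 3 agree by equimeasurability of the rearrangement, not by monotonicity of $w$. Monotonicity is used only for the bound by $\beta$.
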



The main novelty of this work is the extension of the Faber-Krahn inequality for Robin eigenvalues to the nonlinear $p$-Hermite operator. The nonlinearity introduces several difficulties: eigenfunctions lack regularity up to the boundary, the one-dimensional analysis requires a refined phase-plane argument for the logarithmic derivative, and the level-set representation must be adapted to the weighted measure without $L^\infty$-bounds on the eigenfunction. Our proof overcomes these by employing nonlinear PDE techniques, including weak comparison principles, Hopf's lemma, and regularity theory. This result extends  the  Faber-Krahn inequalities (\cite{BucurCV, Daiqiuyi}) for the $p$-Laplacian to $p$-Hermite operator,  and generalizes  the linear result $p=2$ (\cite{ChiacchioMathann}) to  the full nonlinear regime $p>1$.

The paper is organized as follows. In Section \ref{sect2}, we collect preliminary results on weighted Sobolev spaces and give some properties of  the  eigenvalue problem of $p$-Hermite operator.  In Section \ref{sect3}, we study the half-line problem and establish the monotonicity of the logarithmic derivative. In Section \ref{sect4}, we  derive a level-set representation formula for the first eigenvalue. In Section \ref{sect5}, we prove the main theorem.

\section{Preliminaries}\label{sect2}
In this section, we collect some definitions and preliminary results that will be used throughout the paper.
\subsection{Weighted Sobolev spaces and admissible domains}
The weighted Lebesgue space $L^{p}(\Omega, \phi )$ is defined as the set  of all measurable functions on $\Omega$ satisfying 
$$
\|u\|_{L^p}^p(\Omega, \phi ):=\int_{\Omega}|u|^p\phi (x)dx<\infty.$$
 For $1\leq p<\infty$, the weighted Sobolev space $W^{1,p}(\Omega,\phi )$ is defined by
$$W^{1,p}(\Omega,\phi ):=\{u\in W^{1,1}_{\operatorname{loc}}(\Omega): (u,|\nabla u|)\in L^p(\Omega, \phi )\times  L^p(\Omega, \phi )\},$$
which is a Banach space equipped with the norm  
$$\|u\|_{W^{1,p}(\Omega, \phi )}=\|u\|_{L^p(\Omega, \phi )}+\|\nabla u\|_{L^p(\Omega, \phi )}.$$
\begin{definition}\label{Def 2.1}
A Lipschitz domain $\Omega \subset \mathbb{R}^n$ is said to belong to the class $\mathcal{G}$ if $0<\int_\Omega \phi \, dx<1$ and the following conditions hold:
\begin{enumerate}
    \item[(i)] The embedding $W^{1,p}(\Omega, \phi ) \hookrightarrow L^p(\Omega, \phi )$ is compact.
    \item[(ii)] The trace operator
    \begin{align*}
    T : u \in W^{1,p}(\Omega, \phi ) \mapsto u|_{\partial\Omega} \in L^p(\partial\Omega, \phi )
    \end{align*}
    is well-defined.
    \item[(iii)] The trace operator $T$ is compact from $W^{1,p}(\Omega, \phi )$ onto $L^p(\partial\Omega, \phi )$.
\end{enumerate}
\end{definition}

\begin{remark} The set $\mathcal G$ contains all  bounded Lipschitz domains  in $\R^n$. Moreover,  it follows from \cite[Condition 2.1]{Feo13} that $\mathcal G$ also includes sufficiently regular unbounded domains; in particular, all half-spaces belong to $\mathcal G$.
\end{remark}
\subsection{Properties of the first eigenvalue}
For the first eigenvalue $\l_1(\Omega)$ of \eqref{1.2}, we have the following properties.
\begin{proposition}\label{Theorem1}
    Let $\Omega\in \mathcal{G}$. Then there exists a function $u\in C^{1}(\Omega)$, called the first eigenfunction, which solves \eqref{1.2} and attains the infimum in \eqref{variational formula}. The corresponding value $\lambda_1(\Omega)$ is the first eigenvalue of \eqref{1.2}. Moreover,  the first eigenfunction is positive in $\Omega$ and $\lambda_1(\Omega)$ is simple.
\end{proposition}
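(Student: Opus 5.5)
The plan is to prove Proposition \ref{Theorem1} by the direct method of the calculus of variations, and then to get positivity, regularity and simplicity from standard nonlinear tools.

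\textbf{Existence of a minimizer.} Define
\[
Q(u)=\int_\Omega|\nabla u|^p\phi\,dx+\beta\int_{\partial\Omega}|u|^p\phi\,dA
\]
and restrict to the constraint set $M=\{u\in W^{1,p}(\Omega,\phi):\int_\Omega|u|^p\phi\,dx=1\}$. Since $\beta>0$, we have $0\le \lambda_1(\Omega)<\infty$. Take a minimizing sequence $u_k\in M$. It is bounded in $W^{1,p}(\Omega,\phi)$, which is reflexive for $p>1$, so after passing to a subsequence $u_k\rightharpoonup u$ weakly.
\begin{itemize}
\item By condition (i) of Definition \ref{Def 2.1}, $u_k\to u$ in $L^p(\Omega,\phi)$, so $u\in M$.
\item By condition (iii), $Tu_k\to Tu$ in $L^p(\partial\Omega,\phi)$, so the boundary term converges.
\item The gradient term is weakly lower semicontinuous by convexity.
\end{itemize}
Hence $u$ attains the infimum in \eqref{variational formula}. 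Since $|u|$ lies in the same space with the same Rayleigh quotient, we may assume $u\ge0$. Computing the first variation of $Q-\lambda_1\int|u|^p\phi$ at $u$ yields the weak form \eqref{weak form} with $\lambda=\lambda_1(\Omega)$. The same computation shows that any eigenvalue is at least $\lambda_1$, so $\lambda_1$ is the first eigenvalue.

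\textbf{Regularity and positivity.} Inside $\Omega$ the weight $\phi$ is smooth and bounded above and below on compact sets. The equation is therefore locally a quasilinear equation of $p$-Laplace type with smooth coefficients, namely $-\div(\phi|\nabla u|^{p-2}\nabla u)=\lambda_1\phi u^{p-1}$. The steps are:
\begin{itemize}
\item Moser iteration (or Serrin's local estimates) gives $u\in L^\infty_{\rm loc}$.
\item The $C^{1,\alpha}_{\rm loc}$ theory of DiBenedetto and Tolksdorf then gives $u\in C^1(\Omega)$.
\item Since $u\ge0$, $u\not\equiv0$, and $-\div(\phi|\nabla u|^{p-2}\nabla u)\ge0$, Vázquez's strong maximum principle (or the Harnack inequality of Trudinger) gives $u>0$ in the connected set $\Omega$.
\end{itemize}
In particular, every first eigenfunction has constant sign, because $|u|$ is also a minimizer and is therefore positive.

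\textbf{Simplicity.} This is the step I expect to be the main obstacle, because no boundedness of $u$ up to the boundary is available and the boundary term must be controlled. I would use the hidden convexity argument of Belloni and Kawohl. Let $u,v>0$ be two minimizers normalized in $L^p(\Omega,\phi)$. Set $w=\bigl((u^p+v^p)/2\bigr)^{1/p}$, which lies in $M$.
\begin{itemize}
\item Pointwise convexity of $(a,b)\mapsto$ the $\ell^p$-type expression gives $|\nabla w|^p\le\frac12(|\nabla u|^p+|\nabla v|^p)$.
\item On the boundary, $|w|^p=\frac12(|u|^p+|v|^p)$ exactly, so the boundary term is additive.
\end{itemize}
Thus $Q(w)\le\frac12(Q(u)+Q(v))=\lambda_1$, so $w$ is a minimizer and equality holds a.e. in the gradient inequality. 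The equality case forces $\nabla(u/v)=0$ a.e. Since $\Omega$ is connected, $u=cv$.

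The technical care goes into justifying that $w\in W^{1,p}(\Omega,\phi)$ and into computing the trace of $w$. I would handle this by working with $u+\varepsilon$ and $v+\varepsilon$ and letting $\varepsilon\to0$, which avoids division by zero near $\partial\Omega$. Alternatively, one can run Picone's identity with test functions $u^p/(v+\varepsilon)^{p-1}$, which is also legitimate for unbounded $u$ because the quotient $u^p/(v+\varepsilon)^{p-1}$ is controlled by $u$ on the region where $u\le v$ and truncations can be used elsewhere.
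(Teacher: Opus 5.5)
Your proposal is correct and follows essentially the same route as the paper. Both arguments run as follows:
\begin{enumerate}
\item The direct method, using the compact embedding and compact trace that define the class $\mathcal G$.
\item Replacing $u$ by $|u|$ and applying the strong maximum principle to get positivity.
\item Interior $C^1$ regularity via local boundedness plus Tolksdorf.
\item Simplicity via the Belloni--Kawohl hidden-convexity argument with $w=(\psi_1^p+\psi_2^p)^{1/p}$.
\end{enumerate}
Your additional care about $w\in W^{1,p}(\Omega,\phi)$, its trace, and why every $\lambda_1$-eigenfunction is a minimizer fills in details the paper leaves implicit.
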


\begin{proof}
    The proof follows standard arguments. 
Define the energy functional
\begin{align*}
    \Phi (u):=\int_{\Omega}|\nabla u|^{p}\phi dx+\beta\int_{\partial \Omega}|u|^{p}\phi dA.
\end{align*}
The functional $\Phi$ is  convex and hence weakly lower semicontinuous (see \cite{Evans}).
By the definition of $\lambda_1(\Omega)$ and the nonnegativity of $\Phi$, there exists a minimal sequence $\{u_k\}_{k=1}^{\infty}\subset  W^{1,p}(\Omega,\phi )\setminus\{0\}$ such that 
\begin{align*}    
\|u_k\|_{L^p(\Omega,\phi )}=1,
\end{align*} 
and 
\begin{align*}
    \lim\limits_{k\rightarrow \infty}\left(\int_{\Omega}|\nabla u_{k}|^p\phi \,dx+\beta\int_{\partial\Omega}|u_{k}|^p\phi \,dA\right)=\lambda_1(\Omega).
\end{align*} 
By the compact embedding property of $\Omega\in \mathcal{G}$, there exists $u\in W^{1,p}(\Omega,\phi )$ such that, up to a subsequence,  
\begin{align*}
u_{j_{k}} \rightharpoonup u \text{ weakly in } W^{1,p}(\Omega,\phi ),
\end{align*}
\begin{align*}
u_{j_{k}} \to u \text{ strongly in } L^p(\Omega,\phi ),
\end{align*}
and 
\begin{align*}
u_{j_{k}}\rightarrow u \text{ strongly in } L^p(\partial\Omega,\phi ).
\end{align*}
Using the weak lower semicontinuity of $\Phi (u)$, we obtain
\begin{align*}
\Phi (u) \leq \liminf_{j_k\to+\infty} \Phi (u_{j_k}) = \lambda_1(\Omega).
\end{align*}
On the other hand, since $u\in W^{1, p}(\Omega, \phi)$, the definition of $\l_1(\Omega)$ gives $\lambda_1(\Omega) \leq \Phi (u)$. Hence
\begin{align*}
\lambda_1(\Omega) = \Phi (u).
\end{align*}
Now set $\psi = |u|$. It is straightforward to check that  $\lambda_1(\Omega) = \Phi (\psi)$. Moreover, by the strong maximum principle (see Lemma \ref{strong maximum principle} below), $\psi$ is strictly positive  in $\Omega$. Thus, the infimum in \eqref{variational formula} is attained at a positive function $u\in W^{1,p}(\Omega,\phi )$, which is a weak solution of \eqref{1.2}.

It remains to prove the simplicity of $\lambda_{1}(\Omega)$. The argument follows the lines of \cite{Kawohl}.
Let $\psi_{i}(i=1,2)$ be two positive first eigenfunctions normalized by
$$\|\psi_{i}\|_{L^p(\Omega,\phi )}=1.$$ 
Both are minimizers of the Rayleigh quotient \eqref{variational formula}. Define 
$$w:=(\psi_{1}^{p}+\psi_{2}^{p})^{\frac{1}{p}}.$$ Then
    \begin{align*}
        \int_{\Omega}w^{p}\phi \, dx=\int_{\Omega}\psi_{1}^{p}\phi \, dx+\int_{\Omega}\psi_{2}^{p}\phi \, dx=2.
    \end{align*}
Similarly,
    \begin{align*}
        \int_{\partial \Omega}w^{p}\phi \, dA=\int_{\partial \Omega}\psi_{1}^{p}\phi \, dA+\int_{\partial \Omega}\psi_{2}^{p}\phi \, dA.
    \end{align*}
Since $$\frac{\nabla w}{w}=\frac{\left(\psi_1^p\nabla \log \psi_1+\psi_2^p\nabla \log \psi_2\right)}{\psi_1^p+\psi_2^p},$$ 
which is a convex combination of $\nabla \log \psi_i$($i=1,2$), Jensen's inequality for convex functions yields
\begin{align*}
|\nabla w|^p\leq w^p\left(\frac{\psi_1^p|\nabla \log \psi_1|^p}{\psi_1^p+\psi_2^p}+\frac{\psi_2^p|\nabla \log \psi_2|^p}{\psi_1^p+\psi_2^p}\right)=|\nabla \psi_1|^p+|\nabla \psi_2|^p.
\end{align*}
This inequality is strict whenever $\nabla \log \psi_1\neq \nabla \log \psi_2$.
Consequently, 
    \begin{align}\label{Dirichlet term convexity}
        \int_{\Omega}|\nabla w|^{p}\phi \, dx\leq \int_{\Omega}|\nabla \psi_{1}|^{p}\phi \, dx+\int_{\Omega}|\nabla\psi_{2}|^{p}\phi \, dx.
    \end{align}
Hence, 
\begin{align*}
   \frac{1} 2 \Phi(w)\leq \frac{1}{2}(\Phi (\psi_{1})+\Phi (\psi_{2}))= \lambda_{1}(\Omega).
\end{align*}
Since $\l_1(\Omega)$ is the infimum, equality must hold throughout. In particular,  equality  holds in \eqref{Dirichlet term convexity}, which implies
$$
\frac{\nabla \psi_{1}(x)}{\psi_{1}(x)}=\frac{\nabla \psi_{2}(x)}{\psi_{2}(x)}\quad \text{a.e. $x\in \Omega$}.
$$
Therefore,  $\psi_{1}(x)=C \psi_{2}(x)$ a.e. in $\Omega$ for some positive constant $C$. This proves simplicity.
\end{proof}
\begin{lemma}\label{lm2.1}
    If $\Omega$ is Lipschitz and $u$ is the first eigenfunction of (\ref{1.2}), then $u\in C(\overline{\Omega})\cap C^{1}(\Omega)$.
\end{lemma}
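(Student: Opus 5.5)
The plan is to reduce to known regularity theory for quasilinear equations of $p$-Laplace type. The idea is to treat \eqref{1.2} locally as an equation $-\div A(x,\nabla u)=B(x,u)$ with
\[
A(x,\xi)=\phi(x)|\xi|^{p-2}\xi,\qquad B(x,u)=\lambda_1\phi(x)|u|^{p-2}u .
\]
On any bounded subset, $\phi$ is smooth and bounded above and below by positive constants. Hence $A$ satisfies the standard structure conditions (ellipticity and growth of order $p-1$, Hölder continuity in $x$), and the weighted space $W^{1,p}(\Omega,\phi)$ coincides locally with the unweighted $W^{1,p}$. All estimates will therefore be local: on $\Omega\cap B_R$ for each $R$, which suffices for continuity and $C^1$ regularity even when $\Omega$ is unbounded.

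\textbf{Step 1 (local boundedness).} First I would show $u\in L^\infty(\Omega\cap B_R)$ for every $R$. I would use Moser iteration or De Giorgi truncation on $\Omega\cap B_R$, testing \eqref{weak form} with $v=\eta^p u\,\min(|u|,k)^{q}$, where $\eta$ is a cutoff supported in $B_{2R}$.
\begin{itemize}
\item The boundary term has a good sign because $\beta>0$: it equals $\beta\int_{\partial\Omega}\eta^p|u|^p\min(|u|,k)^q\phi\,dA\ge 0$, so it can simply be dropped.
\item The right-hand side is of order $|u|^{p}$, which is the critical growth for the iteration. This is handled in the usual way, via the Sobolev embedding on the Lipschitz set $\Omega\cap B_{2R}$ (an extension domain), exactly as in the unweighted Robin $p$-Laplacian case (cf.\ \cite{BucurCV}).
\end{itemize}

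\textbf{Step 2 (interior $C^{1,\alpha}$).} Once $u$ is locally bounded, the right-hand side $B(x,u)$ is locally bounded. The interior regularity results of DiBenedetto and Tolksdorf (or Lieberman for general structure) then give $u\in C^{1,\alpha}_{\mathrm{loc}}(\Omega)$, and in particular $u\in C^1(\Omega)$.

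\textbf{Step 3 (continuity up to $\partial\Omega$).} Here Lipschitz regularity of $\partial\Omega$ only allows $C^{0,\alpha}$ up to the boundary, not $C^1$. I would follow the De Giorgi approach for Robin/oblique problems, as in Daners' work or Nittka's Hölder regularity for Robin problems on Lipschitz domains. Fix $x_0\in\partial\Omega$ and flatten the boundary near $x_0$ by a bi-Lipschitz map. This turns the problem into one with merely bounded measurable coefficients that preserve the $p$-structure, with a Robin term on a flat boundary. Then:
\begin{itemize}
\item Test with $(u-k)_\pm\eta^p$. The boundary integral $\beta\int|u|^{p-2}u(u-k)_\pm\eta^p\phi\,dA$ is controlled using the local $L^\infty$ bound from Step 1, the trace inequality, and a small multiple of the gradient term.
\item This yields Caccioppoli inequalities on the De Giorgi classes up to the boundary.
\item The standard oscillation decay then gives $u\in C^{0,\alpha}$ near $x_0$.
\end{itemize}
Combined with Step 2, this gives $u\in C(\overline\Omega)\cap C^1(\Omega)$.

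\textbf{Main obstacle.} I expect Step 3 to be the delicate one. The Robin term must be absorbed into the energy estimate at all truncation levels $k$ with constants uniform in the radius. For Lipschitz domains this requires the scale-invariant trace inequality
\[
\int_{\partial\Omega\cap B_r}|w|^p\,dA\le C\Bigl(r^{-1}\int_{\Omega\cap B_r}|w|^p\,dx+\int_{\Omega\cap B_r}|w|^{p-1}|\nabla w|\,dx\Bigr),
\]
with $C$ depending only on the Lipschitz character. The positivity of $\beta$ and of $u$ help here: for $(u-k)_+$ with $k\ge0$ the boundary term is nonnegative and can be discarded, so only the $(u-k)_-$ estimates need the trace inequality.
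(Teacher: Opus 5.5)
Your proposal is correct and follows the same outline as the paper's proof: local boundedness (the paper cites Ladyzhenskaya--Ural'tseva in the interior and Daners near $\partial\Omega$, where you instead run Moser iteration up to the boundary using $\beta>0$), then Tolksdorf's theorem for interior $C^{1,\alpha}$ regularity, then a boundary argument. Your Step 3 (De Giorgi oscillation decay, with the Robin term controlled by the local $L^\infty$ bound and a trace inequality) supplies the continuity up to $\partial\Omega$ that the paper only asserts after citing a boundedness result near the boundary, so your version is the more complete of the two.
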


\begin{proof}
     By Theorem 7.1 and Remark 7.1 in Chapter 4 of \cite{Ladyzhenskaya},  the generalized solution  $u\in W^{1,p}(\Omega,\phi)$ is bounded in any bounded subdomain $\Omega_1\subset\subset \Omega$. Hence $u\in W^{1,p}(\Omega_1)\cap L^{\infty}(\Omega_1)$.  Theorem 1 in \cite{Tolksdorf} then implies $u\in C^{1,\alpha}(\Omega_1)$. The boundary regularity follows from  \cite{DanersTAMS}, which establishes the boundedness of  $u$ in a bounded domain adjacent to a portion of $\partial \Omega$. 
    \end{proof} 
 
\begin{lemma}[Weak comparison principle]\label{comparison lemma}
    Let $\Omega\subset \mathbb{R}^n$ be a  Lipschitz domain. Suppose $u_{1},u_{2}\in W^{1,p}(\Omega,\phi )$ satisfy 
    \begin{align*}
        -\div(\phi |\nabla u_{1}|^{p-2}\nabla u_{1})\leq -\div(\phi |\nabla u_{2}|^{p-2}\nabla u_{2}) \ \ \text{in } \Omega
    \end{align*}
    in weak sense, i.e.,
    \begin{align}\label{weak form of p-lalace}
        \int_{\Omega}|\nabla u_{1}|^{p-2}\nabla u_{1}\nabla v \phi \, dx\leq \int_{\Omega}|\nabla u_{2}|^{p-2}\nabla u_{2}\nabla v \phi \, dx
    \end{align}
    for all $v\geq 0$ and $v\in W_{0}^{1,p}(\Omega,\phi)$.
 If $u_{1}\leq u_{2}$ on $\partial \Omega$, then $u_{1}\leq u_{2}$ in $\Omega$.
\end{lemma}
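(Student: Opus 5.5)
The plan is to test the weak inequality \eqref{weak form of p-lalace} against the positive part of $u_1-u_2$ and then use the strict monotonicity of the vector field $\xi\mapsto|\xi|^{p-2}\xi$. Set
\[
v:=(u_1-u_2)^+ .
\]

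\emph{Admissibility of $v$.} Since $u_1,u_2\in W^{1,p}(\Omega,\phi)$, we have $v\in W^{1,p}(\Omega,\phi)$ with
\[
\nabla v=\chi_{\{u_1>u_2\}}\nabla(u_1-u_2)\quad\text{a.e.}
\]
The hypothesis $u_1\le u_2$ on $\partial\Omega$ is read in the trace sense: the trace of $u_1-u_2$ is $\le 0$. Hence $(u_1-u_2)^+$ has zero trace, and we get $v\in W^{1,p}_0(\Omega,\phi)$. If $\Omega$ is unbounded, we additionally use that the weighted space $W^{1,p}_0(\Omega,\phi)$ is by definition the closure of compactly supported functions. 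We justify this by truncating with cutoffs $\eta_R$ and noting that $\phi$ is bounded above and below on compact sets, so local arguments reduce to unweighted Sobolev facts. Clearly $v\ge 0$, so $v$ is an admissible test function.

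\emph{Main inequality.} Plugging $v$ into \eqref{weak form of p-lalace} gives
\[
\int_{\{u_1>u_2\}}\bigl(|\nabla u_1|^{p-2}\nabla u_1-|\nabla u_2|^{p-2}\nabla u_2\bigr)\cdot(\nabla u_1-\nabla u_2)\,\phi\,dx\le 0 .
\]
We then invoke the elementary inequality
\[
\bigl(|a|^{p-2}a-|b|^{p-2}b\bigr)\cdot(a-b)\ge 0,
\]
with equality only if $a=b$. One proof expands the left side and uses $|a|^{p-2}a\cdot b\le|a|^{p-1}|b|$, which gives a lower bound of $(|a|^{p-1}-|b|^{p-1})(|a|-|b|)\ge0$. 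Equality forces $|a|=|b|$ and $a\parallel b$ in the same direction, hence $a=b$. Since $\phi>0$ everywhere, the integrand is nonnegative and integrates to something $\le 0$. Therefore $\nabla u_1=\nabla u_2$ a.e. on $\{u_1>u_2\}$, that is, $\nabla v=0$ a.e. in $\Omega$.

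\emph{Conclusion.} Since $\Omega$ is connected, $v$ is a.e. constant. A nonzero constant has nonzero trace, which contradicts $v\in W^{1,p}_0$. Equivalently, in the bounded case one can apply Poincaré's inequality, and for unbounded domains the Gaussian Poincaré-type inequality on $W_0^{1,p}(\Omega,\phi)$ with $\gamma(\Omega)<1$. Hence $v=0$, i.e. $u_1\le u_2$ a.e. in $\Omega$.

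I expect the main obstacle to be the functional-analytic step: showing that $(u_1-u_2)^+$ lies in $W_0^{1,p}(\Omega,\phi)$ when $\Omega$ is an unbounded Lipschitz domain, and the final step ruling out nonzero constants in $W_0^{1,p}(\Omega,\phi)$. The monotonicity computation itself is routine.
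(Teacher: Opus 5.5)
Your proposal is correct and follows the same route as the paper: you test the weak inequality with $(u_1-u_2)^+$, which lies in $W_0^{1,p}(\Omega,\phi)$ because of the boundary condition, and then use the strict monotonicity of $\xi\mapsto|\xi|^{p-2}\xi$. You also spell out the step the paper leaves implicit, namely that $\nabla(u_1-u_2)^+=0$ a.e., together with connectedness and zero trace, forces $(u_1-u_2)^+=0$; this tacitly uses $\partial\Omega\neq\emptyset$, which the statement itself needs anyway.
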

\begin{proof}
    Choose $v=\max\{u_{1}-u_{2},0\}$. Since $u_{1}\leq u_{2}$ on $\partial \Omega$, we have $v\in W_{0}^{1,p}(\Omega, \phi)$. Inserting this test function into \eqref{weak form of p-lalace} gives
    \begin{align*}
        \int_{\{u_{1}>u_{2}\}}(|\nabla u_{1}|^{p-2}\nabla u_{1}-|\nabla u_{2}|^{p-2}\nabla u_{2})\cdot (\nabla u_{1}-\nabla u_{2})\phi\, dx\leq 0.
    \end{align*}
By the standard monotonicity inequality (see Lemma 1 in \cite{Tolksdorf}), we conclude that $u_{1}\leq u_{2}$ a.e. 
in $\Omega$.
\end{proof}

\begin{lemma}[Hopf's lemma]\label{Hopf's lemma}
    Let $\Omega\subset \mathbb{R}^n$ be a $C^{1,1}$ domain. Let $u\in C^{1}(\overline{\Omega})$ satisfy 
    \begin{align*}
      -\div\left( \phi |\nabla u|^{p-2} \nabla u \right) \ge 0 \qquad \text{in $\Omega$},  
    \end{align*}
    and suppose $u>0$ in $\Omega$ and $u(x_0)=0$ at some $x_0\in \p \Omega$. Then 
    $$
    \frac{\partial u}{\partial \nu}(x_{0})<0,
    $$
where $\nu$ denotes the unit outward vector normal to $\partial \Omega$.
\end{lemma}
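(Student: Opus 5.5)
The plan is to reduce to the classical Hopf lemma for the $p$-Laplacian with lower order terms via a local barrier argument, using the interior ball condition available for $C^{1,1}$ domains. Since $\phi$ is smooth and positive, the inequality $-\div(\phi|\nabla u|^{p-2}\nabla u)\ge 0$ can be expanded, wherever $\nabla u\neq 0$, as
\begin{align*}
-\div(|\nabla u|^{p-2}\nabla u)+|\nabla u|^{p-2}\,x\cdot\nabla u\ge 0,
\end{align*}
because $\nabla\phi=-x\phi$. This is a $p$-Laplace type supersolution inequality with a bounded drift on bounded sets.

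First, take an interior ball $B_R(y)\subset\Omega$ with $x_0\in\p B_R(y)$; this exists by the $C^{1,1}$ regularity. Then work in the annulus $A=B_R(y)\setminus\overline{B_{R/2}(y)}$. There I would build a radial barrier
\begin{align*}
w(x)=\varepsilon\bigl(e^{-\alpha|x-y|^2}-e^{-\alpha R^2}\bigr).
\end{align*}
For a radial function $w=g(r)$ with $g'<0$, the operator $-\div(\phi|\nabla w|^{p-2}\nabla w)$ can be computed explicitly as
\begin{align*}
-\phi\,|g'|^{p-2}\Bigl((p-1)g''+\tfrac{n-1}{r}g'-(x\cdot e_r)g'\Bigr), \qquad e_r=\tfrac{x-y}{|x-y|}.
\end{align*}
The term $|x\cdot e_r|$ is bounded on $A$ by $|y|+R$. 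Choosing $\alpha$ large makes the $(p-1)g''$ term dominate, so that $-\div(\phi|\nabla w|^{p-2}\nabla w)\le 0$ in $A$ in the classical sense. Since $g'\ne0$ on $A$, this also holds weakly. This computation, the choice of $\alpha$ depending on $p$, $n$, $R$ and $|y|$, is the main technical step.

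Next, handle the boundary of the annulus. On $\p B_R(y)$ we have $w=0\le u$. On $\p B_{R/2}(y)$, which is a compact subset of $\Omega$, we have $u\ge m>0$ by continuity and positivity, so choosing $\varepsilon$ small gives $w\le u$ there. Then apply Lemma \ref{comparison lemma} on $A$ with $u_1=w$ and $u_2=u$. The weak supersolution property of $u$ holds on $A$, and $w,u\in W^{1,p}(A,\phi)$ since $A$ is bounded and $u\in C^1(\overline\Omega)$. This yields $w\le u$ in $A$.

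Finally, since $u(x_0)=w(x_0)=0$ and $u\ge w$ along the inward normal segment, we get
\begin{align*}
\frac{\p u}{\p\nu}(x_0)\le\frac{\p w}{\p\nu}(x_0)=g'(R)=-2\alpha R\varepsilon e^{-\alpha R^2}<0,
\end{align*}
because the outward normal to $\Omega$ at $x_0$ coincides with $e_r$. The only delicate points are the sign bookkeeping in the radial computation, and checking that the test function $(w-u)^+$ lies in $W^{1,p}_0(A)$, which holds since $w\le u$ on $\p A$ and both are continuous.
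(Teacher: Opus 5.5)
Your proposal is correct and matches the paper's proof: the same interior-ball barrier $\varepsilon\bigl(e^{-\alpha|x-y|^2}-e^{-\alpha R^2}\bigr)$ on the annulus $B_R(y)\setminus\overline{B_{R/2}(y)}$, with $\alpha$ taken large so that the barrier is a subsolution of the weighted operator (using $\nabla\phi=-x\phi$), followed by Lemma \ref{comparison lemma} and comparison of normal derivatives at $x_0$. Your version is also a bit more explicit than the paper's on two points: the bound $|x\cdot e_r|\le |y|+R$ used to choose $\alpha$, and the positive lower bound of $u$ on $\p B_{R/2}(y)$ used to choose $\varepsilon$.
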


\begin{proof}
Let $x_{0}\in \partial \Omega$. There exists an open ball $B_{R}(y)\subset \Omega$ such that $x_{0}\in \partial B_{R}(y)\cap \partial \Omega$.  Define $v(r)=e^{-\alpha r^{2}}-e^{-\alpha R^{2}}$, where $r(x):=|x-y|$ and $\alpha>0$ is a constant chosen later.
A direct computation gives
\begin{align*}
    \nabla v=e^{-\alpha r^{2}}(-2\alpha r\nabla r), \qquad |\nabla v|^{p-2}=e^{-\alpha (p-2)r^{2}}(2\alpha)^{p-2}r^{p-2}|\nabla r|^{p-2}.
\end{align*}
Then
\begin{align*}
    -\div(|\nabla v|^{p-2}\nabla v)
    =&\div(e^{-\alpha (p-1)r^{2}}(2\alpha)^{p-1}r^{p-1}|\nabla r|^{p-2}\nabla r)\nonumber\\
    = &e^{-\alpha (p-1)r^{2}}(2\alpha)^{p-1}r^{p-1}\Delta r+(p-1)e^{-\alpha (p-1)r^{2}}(2\alpha)^{p-1}r^{p-2}\\
    &-(p-1)e^{-\alpha (p-1)r^{2}}(2\alpha)^{p}r^{p}\nonumber\\
    =&(2\alpha)^{p-1}e^{-\alpha (p-1)r^{2}}r^{p-2}\left[n-1+p-1-(p-1)(2\alpha)r^{2}\right],
\end{align*}
and
\begin{align*}
    &-\div(|\nabla v|^{p-2}\nabla v)+|\nabla v|^{p-2}x\cdot \nabla v\\
    =&(2\alpha)^{p-1}e^{-\alpha (p-1)r^{2}}r^{p-2}\left[n+p-2-(p-1)(2\alpha)r^{2}\right]\nonumber\\
    &-e^{-(p-1)\alpha r^{2}}(2\alpha)^{p-1}r^{p-1}x\cdot \nabla r\\
    =&(2\alpha)^{p-1}e^{-\alpha (p-1)r^{2}}r^{p-2}\left[n+p-2-(p-1)(2\alpha)r^{2}-r\nabla r\cdot x\right].
\end{align*}
Choosing $\alpha$ sufficiently large, we obtain 
\begin{align*}
    -\div(|\nabla v|^{p-2}\nabla v)+|\nabla v|^{p-2}x\cdot \nabla v \leq 0  \quad \text{in  $B_{R}(y)\setminus B_{R/2}(y)$},
\end{align*}
which is equivalent to
\begin{align*}
    -\div(\phi |\nabla v|^{p-2}\nabla v)\leq 0 \quad \text{in  $B_{R}(y)\setminus B_{R/2}(y)$}.
\end{align*}
Now choose $\epsilon>0$ small enough so that $w:=\epsilon v$,  on $\partial B_{R/2}(y)\subset \Omega$.
Then $w\leq u$ on $ \partial (B_{R}(y)\setminus B_{R/2}(y))$. By Lemma \ref{comparison lemma}, we have $w \leq u$ in $B_{R}(y)\setminus B_{R/2}(y)$. Consequently,
\begin{align*}
    \frac{\partial u}{\partial \nu}(x_0)\leq \frac{\partial w}{\partial \nu}(x_0)=\epsilon(-2\alpha R e^{-\alpha R^{2}})<0,
\end{align*}
proving the lemma.
\end{proof}
By  \cite[Theorem 8.1]{Serrin}, we also have the strong maximum principle.
\begin{lemma}[Strong maximum principle]\label{strong maximum principle}
    If $u\in C^{1}(\Omega)$ satisfies   in the weak sense
    \begin{align*}
        \begin{cases}
-\div\left(\phi |\nabla u|^{p-2}\nabla u\right) \geq 0, & x \in \Omega, \\[4pt]
u \geq 0, & x \in \Omega,
\end{cases}
    \end{align*}
  then $u(x_{0})=0$ for some $x_{0}\in \Omega$ implies $u(x)\equiv 0$ in $\Omega$.
\end{lemma}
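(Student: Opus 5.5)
The plan is to reduce the weighted inequality to the unweighted Serrin setting by absorbing the smooth, positive weight locally. First I expand the operator. For $u\in C^{1}(\Omega)$, and formally,
\begin{align*}
-\div\bigl(\phi|\nabla u|^{p-2}\nabla u\bigr)=\phi\Bigl(-\div\bigl(|\nabla u|^{p-2}\nabla u\bigr)+|\nabla u|^{p-2}\,x\cdot\nabla u\Bigr).
\end{align*}
Since $\phi>0$ is smooth and bounded above and below on every compact subset, $u$ is a weak supersolution of
\begin{align*}
-\div\bigl(A(x,\nabla u)\bigr)+B(x,\nabla u)\ge 0,\qquad A(x,\xi)=|\xi|^{p-2}\xi,\quad B(x,\xi)=|\xi|^{p-2}\,x\cdot\xi.
\end{align*}
This holds on every bounded subdomain $\Omega_1\subset\subset\Omega$. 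To see it, take a test function $\eta\ge0$ in $C_c^\infty(\Omega_1)$. Insert $\eta/\phi$ (or equivalently $\eta$ itself, with the divergence form kept) into the weak inequality and use $\nabla\phi=-x\phi$.

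Next I check the structure conditions of \cite[Theorem 8.1]{Serrin} (equivalently, Vázquez's strong maximum principle for quasilinear operators) on $\Omega_1$. These are:
\begin{align*}
A(x,\xi)\cdot\xi=|\xi|^p,\qquad |A(x,\xi)|\le|\xi|^{p-1},\qquad |B(x,\xi)|\le R\,|\xi|^{p-1},
\end{align*}
where $R=\sup_{\Omega_1}|x|<\infty$. The lower-order term is therefore controlled by $b|\xi|^{p-1}$ with a bounded coefficient, and there is no zero-order term in $u$. This is exactly the admissible class for the Harnack inequality for nonnegative supersolutions. That inequality gives $\inf_{B_{r}}u\ge c\,\bigl(\avint_{B_{2r}}u^{s}\bigr)^{1/s}$ for some $s>0$, whenever $B_{3r}\subset\Omega_1$.

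Finally comes the connectedness argument. Let $Z=\{x\in\Omega: u(x)=0\}$. It is closed in $\Omega$ by continuity of $u$, and nonempty because it contains $x_0$. To see it is open, take $z\in Z$ and a small ball with $B_{3r}(z)\subset\Omega$. The weak Harnack inequality gives $0=u(z)\ge\inf_{B_r(z)}u\ge c\,\|u\|_{L^s(B_{2r}(z))}$, so $u\equiv0$ on $B_{2r}(z)$. Since $\Omega$ is a domain, hence connected, $Z=\Omega$.

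The main obstacle is the careful verification that the Gaussian drift term fits Serrin's hypotheses. The coefficient $|x|$ is unbounded on unbounded $\Omega$, so I will only ever apply the result on bounded balls; this suffices because openness is a local property.

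An alternative to Harnack is a Hopf-type barrier argument using the radial function $v=e^{-\alpha r^2}-e^{-\alpha R^2}$ from the proof of Lemma~\ref{Hopf's lemma}. Take the boundary point of a maximal ball contained in $\{u>0\}$ that touches $Z$. There $\nabla u$ would have to vanish, since $u$ attains its interior minimum, contradicting the strict negativity of the normal derivative. This gives another route that relies only on Lemma~\ref{comparison lemma}.
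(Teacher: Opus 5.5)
Your proposal is correct and follows the paper's route: the paper proves this lemma simply by invoking Serrin's Theorem 8.1, and your argument is a fleshed-out version of that citation. What you add is what makes the citation legitimate on a possibly unbounded $\Omega$: expanding the weighted operator as $-\div(|\nabla u|^{p-2}\nabla u)+|\nabla u|^{p-2}x\cdot\nabla u$, working only on bounded subdomains so the drift coefficient $|x|$ stays bounded, and closing with the weak-Harnack open--closed argument on the connected set $\Omega$.
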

This lemma implies that any nonnegative first eigenfunction corresponding to $\lambda_{1}(\Omega)$  is strictly positive in $\Omega$.

\section{The eigenvalue problem on half-lines}\label{sect3}
In this section, we establish several results for the first eigenvalue and the corresponding eigenfunction on half-lines. Let $I_{\sigma}:=(-\infty, \sigma)$, and consider the following one-dimensional problem:
\begin{equation}\label{1 dimen eigen equation}
\begin{cases}
-\div\left(|w'|^{p-2}w'\phi _1(t)\right)=\lambda_1(\sigma)\phi _1(t)|w|^{p-2}w, \quad t \in I_{\sigma},\\
|w'|^{p-2}w'(\sigma)+\beta|w|^{p-2}w(\sigma)=0,
\end{cases}
\end{equation}
where $\phi _1(t)=(2\pi)^{-1/2}e^{-t^2/2}$.
The  first eigenvalue $\lambda_1(\sigma)$ of \eqref{1 dimen eigen equation} admits the variational characterization
\begin{align}\label{3.1}
\lambda_1(\sigma) = \min_{v \in W^{1,p}(I_\sigma,\phi _1)\setminus\{0\}}
\frac{\displaystyle \int_{-\infty}^{\sigma} (|v'|(t))^p \phi _1(t)\,dt + \beta |v(\sigma)|^p\phi _1(\sigma)}
{\displaystyle \int_{-\infty}^{\sigma} |v(t)|^p \phi _1(t)\,dt}.
\end{align}
Problem \eqref{1 dimen eigen equation} is  closely related to the original problem \eqref{1.2} when $\Omega$ is a half-space. Indeed, for $\sigma\in \R$, define the half-space  
\begin{equation}\label{3.2}
S_{\sigma}:=\{x=(x_1,x_2,\dots,x_n)\in\mathbb{R}^n:x_1<\sigma\}.
\end{equation}
The corresponding $n$-dimensional problem reads
\begin{align}\label{3.3}
\begin{cases}
-\div\bigl(\phi |\nabla u|^{p-2}\nabla u\bigr)=\lambda(S_{\sigma})\,\phi |u|^{p-2}u & \text{in }S_{\sigma},\\[4pt]
|\nabla u|^{p-2}\nabla u\cdot\nu+\beta\,|u|^{p-2}u=0 & \text{on } \{x_1=\sigma\},
\end{cases}
\end{align}
where
\begin{align}\label{3.4}
\lambda_1(S_\sigma)=\inf_{v\in W^{1,p}(S_\sigma,\phi )\setminus\{0\}}
\frac{\int_{S_\sigma}|\nabla v|^p\phi \,dx
+\beta\int_{\{x_1=\sigma\}}|v|^p\phi \,dA}
{\int_{S_\sigma}|v|^p\phi \,dx}.
\end{align}
By Proposition \ref{Theorem1},  the first eigenvalue $\lambda_1(S_\sigma)$ is simple and its corresponding eigenfunctions  are  positive. In this setting, we get the following results: 
\begin{proposition}\label{Theorem2}
Let $u$ be a positive eigenfunction corresponding to $\lambda_1(S_\sigma)$. Then there exists a function $w: I_\sigma \to (0,+\infty)$ such that $u(x)=w(x_1)$ and $w$ solves \eqref{3.1} with $\lambda_1(\sigma) = \lambda_1(S_\sigma)$. Moreover, $w$ is strictly  decreasing on $I_\sigma$.
\end{proposition}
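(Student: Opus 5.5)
We argue in three steps: dependence on $x_1$ alone, identification of $\lambda_1(\sigma)$ with $\lambda_1(S_\sigma)$, and strict monotonicity of $w$.

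\textbf{Step 1: $u$ depends only on $x_1$.} Fix an orthogonal transformation $R$ of $\R^n$ that fixes the $x_1$-axis, i.e.\ acts only on $(x_2,\dots,x_n)$. Such an $R$ preserves $S_\sigma$, its boundary $\{x_1=\sigma\}$, the Gaussian density $\phi$ and the surface measure. Hence $u\circ R$ is again a positive minimizer of \eqref{3.4}, and simplicity in Proposition \ref{Theorem1} gives $u\circ R=Cu$ with $C>0$. Normalization in $L^p(S_\sigma,\phi)$ forces $C=1$.

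Rotations alone only give radial dependence in $(x_2,\dots,x_n)$, so we also use translation invariance. For $h\in\{0\}\times\R^{n-1}$ the density changes as $\phi(x+h)=\phi(x)e^{-x\cdot h-|h|^2/2}$, so translation is not a symmetry. The cleaner route is tensorization. Since $\phi(x)=\phi_1(x_1)\phi_{n-1}(x')$, let $w$ be the positive first eigenfunction of the one-dimensional problem \eqref{1 dimen eigen equation}, obtained via \eqref{3.1} exactly as in Proposition \ref{Theorem1} (the half-line belongs to the class for $n=1$). The function $W(x)=w(x_1)$ satisfies the equation and the boundary condition in \eqref{3.3} weakly, because $\nabla W=(w',0,\dots,0)$ and the $x'$-integration splits off the factor $\int_{\R^{n-1}}\phi_{n-1}=1$. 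So $W$ is a positive eigenfunction of \eqref{3.3} with eigenvalue $\lambda_1(\sigma)$.

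\textbf{Step 2: $\lambda_1(\sigma)=\lambda_1(S_\sigma)$ and $u=cW$.} Any positive eigenfunction for a Robin $p$-Laplace-type problem is a first eigenfunction. To see this, apply the Picone identity for the $p$-Laplacian (or the convexity argument of Proposition \ref{Theorem1}) with the test function $u^p/(W+\epsilon)^{p-1}$, and let $\epsilon\to0$. This yields $\lambda_1(\sigma)\le\lambda_1(S_\sigma)$. The reverse inequality holds because $W$ is admissible in \eqref{3.4} and its Rayleigh quotient equals the one-dimensional quotient. Simplicity then gives $u=cW$, so $u$ depends only on $x_1$.

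Alternatively, one can bypass Picone. For any $v\in W^{1,p}(S_\sigma,\phi)$, the inequality $|\nabla v|\ge|\partial_1 v|$ together with Fubini gives, for a.e.\ $x'$, the one-dimensional Rayleigh inequality for $v(\cdot,x')$. Integrating in $x'$ against $\phi_{n-1}$ yields $\lambda_1(S_\sigma)\ge\lambda_1(\sigma)$ directly. This is the route I would use.

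\textbf{Step 3: $w$ is strictly decreasing.} The ODE reads $(\phi_1|w'|^{p-2}w')'=-\lambda\phi_1 w^{p-1}$, with $\lambda>0$ since $\beta>0$. Because $w>0$, the right-hand side is negative, so $g:=\phi_1|w'|^{p-2}w'$ is strictly decreasing on $I_\sigma$. We claim $g\le 0$ everywhere, which rests on two facts.
\begin{itemize}
\item[(a)] $\lim_{t\to-\infty}g(t)=0$. The limit exists by monotonicity. If it were positive or $+\infty$, then $|w'|^{p-1}\ge c\,e^{t^2/2}$ near $-\infty$. This contradicts $\int|w'|^p\phi_1<\infty$, since $|w'|^p\phi_1\ge c' e^{t^2/(2(p-1))}$ there.
\item[(b)] At $t=\sigma$ the boundary condition gives $g(\sigma)=-\beta\phi_1(\sigma)w(\sigma)^{p-1}<0$.
\end{itemize}
Since $g$ is strictly decreasing from $0$, we get $g<0$ on $I_\sigma$, hence $w'<0$ and $w$ is strictly decreasing.

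The main obstacle is the limit at $-\infty$ in (a): excluding the values $+\infty$ and positive limits requires care with the weighted integrability, and regularity of $w$ at $\sigma$ has to come from the one-dimensional ODE.
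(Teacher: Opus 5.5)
Your proof is correct, but it differs from the paper's at two points.

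\textbf{Identifying $\lambda_1(\sigma)$ with $\lambda_1(S_\sigma)$.} The paper only checks that $w(x_1)$ solves \eqref{3.3}. It then appeals to simplicity and positivity in Proposition \ref{Theorem1}. This leaves implicit that a positive eigenfunction must belong to the \emph{first} eigenvalue, which simplicity alone does not give.

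Your slicing argument supplies exactly the missing bound $\lambda_1(S_\sigma)\ge\lambda_1(\sigma)$. You use $|\nabla v|\ge|\partial_1 v|$, apply the one-dimensional Rayleigh inequality on almost every line $\{x'=\mathrm{const}\}$, and integrate against $\phi_{n-1}$. Admissibility of $w(x_1)$ then makes it a minimizer, and simplicity yields $u=c\,w(x_1)$. The Picone alternative would need an extra truncation of $u$, since $u$ need not be bounded, so the slicing route is the right choice.

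\textbf{Monotonicity.} The paper works in $n$ dimensions. Weak comparison on $S_{r_0}$ gives $u\ge w(r_0)$, the strong maximum principle makes this strict, and Hopf's lemma on $\{x_1=r_0\}$ gives $w'(r_0)<0$.

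You stay in one dimension. The flux $\phi_1|w'|^{p-2}w'$ has continuous derivative $-\lambda_1(\sigma)\phi_1 w^{p-1}<0$. Its limit at $-\infty$ cannot be positive because $\int_{-\infty}^{\sigma}|w'|^p\phi_1\,dt<\infty$. This is the same estimate the paper uses only afterwards, to prove \eqref{3.6}.

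Your route avoids comparison principles, Hopf's lemma, and the associated regularity up to $\{x_1=r_0\}$, and is more elementary. The paper's route does not depend on the ODE reduction, but it is heavier here.

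\textbf{Small remarks.} In (a) you only need to exclude a positive limit, since strict decrease then already forces $g<0$ on $I_\sigma$. So (b) is not needed on the open interval. The rotation discussion at the start of Step 1 can simply be dropped.
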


\begin{proof} 
Given the eigenfunction $w$ for the one-dimensional problem \eqref{3.1}, define  $u(x):=w(x_1)$. it is  straightforward to verify that $u$ satisfies \eqref{3.3}. By  Proposition \ref{Theorem1}, the first eigenfunction for the half-space problem is simple and positive; hence  $u$ is the positive eigenfunction corresponding to $\lambda_1(S_\sigma)$. Therefore $\lambda_1(\sigma) = \lambda_1(S_\sigma)$ .

It remains to prove $w$ is strictly decreasing. Fix any $r_0\in(-\infty,\sigma)$. From \eqref{3.3}, we have 
\begin{align*}
\begin{cases}
 -\div\bigl(\phi (x)|\nabla u|^{p-2}\nabla u\bigr)=\lambda_1(\sigma)\phi |u|^{p-2}u>0, & \ x \in S_{r_0},\\[4pt]
u(x)=w(r_0) & x\in  \{x_1=r_0\}.
\end{cases}
\end{align*}
Applying the weak comparison principle (Lemma \ref{comparison lemma}), we obtain
\begin{align*}
u(x)\geq w(r_0)\quad \text{for } x\in S_{r_0}.
\end{align*}
Since $u(x)$ is not  constant, the strong maximum principle (Lemma \ref{strong maximum principle}) implies  that the inequality is strict:
\begin{align*}
   u(x)>w(r_0)\quad \text{for } x\in S_{r_0}. 
\end{align*}
Now we set $v(x)=u(x)-w(r_0)=w(x_1)-w(r_0)$. Then $v$ satisfies 
\begin{align}
\begin{cases}
-\div\bigl(\phi (x)|\nabla v(x)|^{p-2}\nabla  v(x)\bigr)=\l_1(S_{\sigma}) |u(x)|^{p-2}u(x)>0, & \ x \in S_{r_0},\\[4pt]
v(x)=u(x)-w(r_0)=0, &  x\in \{x_1=r_0\}.
\end{cases}
\end{align}
Applying  Hopf's Lemma (Lemma \ref{Hopf's lemma}), we conclude $w'(r_0)<0$. Since $r_0\in (-\infty, \sigma)$ is arbitrary, $w$ is strictly decreasing on $I_\sigma$.
\end{proof}

\begin{remark}\label{remark1}
Once  $w'(r)<0$ is established, standard elliptic regularity theory implies that $w\in C^{\infty}(I_{\sigma})$.
\end{remark}

By Remark \ref{remark1}, problem \eqref{1 dimen eigen equation} can be rewritten in the non-divergence form
\begin{equation}\label{3.5}
\begin{cases}
-(p-1)|w'|^{p-2}w^{''}+t|w'|^{p-2}w'=\lambda_1(\sigma)|w|^{p-2}w,\qquad t\in I_{\sigma}\\
|w'|^{p-2}w'(\sigma)+\beta|w|^{p-2}w(\sigma)=0.
\end{cases}
\end{equation}
We next claim that
\begin{equation}\label{3.6}
\lim\limits_{t\rightarrow -\infty}|w'|^{p-2}w'(t)e^{-\frac{t^2}{2}}=0.
\end{equation}
Indeed, since $w(t)>0$ on $I_{\sigma}$, equation \eqref{1 dimen eigen equation} implies
\begin{equation*}
\left(|w'|^{p-2}w'\phi _1\right)'<0, \quad t\in I_{\sigma}.
\end{equation*}
Thus, the limit 
\begin{align*}
   L:=\lim\limits_{t\rightarrow -\infty}|w'|^{p-2}w'(t)e^{-\frac{t^2}{2}}.
\end{align*}
exists in $(-\infty, +\infty]$. If  $L\neq 0$, then for sufficiently large $|t|$, there exists a constant $C>0$ such that $|w'|\geq Ce^{t^2/(2p-2)}$.  Since  $p>1$, this  would imply 
$$
\int_{-\infty}^\sigma |w'(t)|^pe^{-\frac{t^2}{2}}dt=+\infty,
$$
 contradicting with the fact that $u\in W^{1,p}(\Omega, \phi _1)$. Hence \eqref{3.6} holds. 

Now define
\begin{align}\label{def of beta}
   g(t):=\frac{|w'|^{p-1}}{w^{p-1}},
\end{align}
and we will show  that $g(t)$ is strictly increasing on $I_\sigma$ and satisfies $g(\sigma)=\beta$.
\begin{proposition}\label{monotone of beta prop}
    The function $g(t)$ defined in (\ref{def of beta}) is positive, strictly increasing in $(-\infty,\sigma)$,
and $g(\sigma) = \beta$.
\end{proposition}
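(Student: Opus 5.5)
Since $w>0$ and $w'<0$ on $I_\sigma$ (Proposition \ref{Theorem2}), $g=(-w')^{p-1}w^{1-p}$ is smooth (Remark \ref{remark1}) and positive. The boundary condition in \eqref{3.5} reads $-(-w'(\sigma))^{p-1}+\beta w(\sigma)^{p-1}=0$, which gives $g(\sigma)=\beta$ immediately. The real content is the monotonicity, and I would get it from a first-order Riccati-type ODE for $g$.

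\textbf{Step 1: the ODE for $g$.} Put $h:=(-w')^{p-1}=-|w'|^{p-2}w'$. The divergence form of \eqref{1 dimen eigen equation} gives $(h\phi_1)'=\lambda_1(\sigma)\phi_1 w^{p-1}$, that is, $h'-th=\lambda_1(\sigma)w^{p-1}$. Differentiating $g=hw^{1-p}$ and using $-w'/w=g^{1/(p-1)}$, I expect
\begin{equation*}
g'(t)=\lambda_1(\sigma)+t\,g(t)+(p-1)\,g(t)^{\frac{p}{p-1}}=:F(t,g(t)).
\end{equation*}
For $t\ge 0$ every term is nonnegative and $\lambda_1(\sigma)>0$, so $g'>0$ there. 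The difficulty is $t<0$.

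\textbf{Step 2: critical points are strict minima.} At any $t_0$ with $g'(t_0)=0$, differentiating the ODE gives $g''(t_0)=g(t_0)>0$, so $t_0$ is a strict local minimum. Consequently $g'$ can vanish at most once and changes sign at most once, from negative to positive. So if $g'$ fails to be positive everywhere on $I_\sigma$, there is $t_1<\sigma$ with $g'<0$ on $(-\infty,t_1)$. It then suffices to rule this out.

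\textbf{Step 3: ruling out $g'<0$ near $-\infty$ (the main obstacle).} Suppose $g'<0$ on $(-\infty,t_1)$.
\begin{itemize}
\item[(a)] Then $g$ increases as $t\to-\infty$ to a limit $G\in(g(t_1),+\infty]$. If $G<\infty$, then $F(t,g)\le \lambda_1(\sigma)+tg(t_1)+(p-1)G^{p/(p-1)}\to-\infty$. Integrating backwards would force $g\to+\infty$, a contradiction. Hence $G=+\infty$.
\item[(b)] The condition $g'<0$ means $(g,t)$ lies below the nullcline $tg+(p-1)g^{p/(p-1)}=-\lambda_1(\sigma)$. This curve is asymptotic to $g^{1/(p-1)}=|t|/(p-1)$.
\item[(c)] I would then show that $g$ actually tracks the nullcline: for each $\varepsilon>0$, eventually $g\ge g_\varepsilon:=\bigl((1-\varepsilon)|t|/(p-1)\bigr)^{p-1}$. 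To prove this, I plan to compare with $g_\varepsilon$. A direct computation gives $F(t,g_\varepsilon)=\lambda_1(\sigma)-\varepsilon|t|g_\varepsilon\to-\infty$, which dominates $g_\varepsilon'=O(|t|^{p-2})$. So $g_\varepsilon$ is a strict barrier for the backward flow, and a trajectory with $g\to\infty$ and $g'<0$ cannot stay below it. This comparison is the delicate step. I would also cross-check it against the representation $g(t)=\lambda_1(\sigma)\int_{-\infty}^t\phi_1w^{p-1}ds\,/(\phi_1(t)w(t)^{p-1})$, which follows from \eqref{3.6}.
\item[(d)] With (c) in hand, $-w'/w\ge(1-\varepsilon)|t|/(p-1)$ for $t$ very negative. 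This gives $w(t)\ge c\exp\bigl((1-\varepsilon)t^2/(2(p-1))\bigr)$, and hence
\begin{equation*}
w^p\phi_1\gtrsim\exp\Bigl(\tfrac{t^2}{2}\bigl(\tfrac{p(1-\varepsilon)}{p-1}-1\bigr)\Bigr).
\end{equation*}
For $\varepsilon$ small the exponent's coefficient is $\tfrac{1-p\varepsilon}{p-1}>0$, so the right-hand side is non-integrable at $-\infty$. This contradicts $w\in L^p(I_\sigma,\phi_1)$.
\end{itemize}

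Thus $g'>0$ on $I_\sigma$ except possibly at a single point where $g'=0$. In either case $g$ is strictly increasing on $(-\infty,\sigma)$, which completes the proof.
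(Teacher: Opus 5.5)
Your argument is correct, and through Step 3(a) it matches the paper's: the same Riccati equation $g'=\lambda_1(\sigma)+tg+(p-1)g^{p/(p-1)}$, the same observation $g''=g>0$ at critical points, and the same exclusion of a finite limit of $g$ at $-\infty$. The two proofs differ in the final contradiction.

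\textbf{The paper's route.} The paper stays qualitative. Once $g\to+\infty$, it has $|w'|^p>Kw^p$ near $-\infty$ with $K>\lambda_1(\sigma)$. So the tail $(-\infty,t_5)$ has Rayleigh quotient larger than $\lambda_1(\sigma)$. Replacing $w$ there by the constant $w(t_5)$ gives a competitor in \eqref{3.1} with quotient strictly below $\lambda_1(\sigma)$, a contradiction.

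\textbf{Your route.} You instead do a quantitative phase-plane analysis. You trap $g$ above $g_\varepsilon$, deduce $w\gtrsim e^{(1-\varepsilon)t^2/(2(p-1))}$, and contradict $w\in L^p(I_\sigma,\phi_1)$ using $p/(p-1)>1$.

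\textbf{Comparison.} The paper's argument is shorter and needs no asymptotics, but it relies on the minimality of $w$. Yours uses only that $w$ is a positive decreasing solution of the ODE lying in $L^p(\phi_1)$, so it applies to any such solution. It also shows why the bad branch, $-w'/w\sim|t|/(p-1)$, is excluded.

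\textbf{One step in (c) is asserted rather than proved.} The barrier computation $F(t,g_\varepsilon)<g_\varepsilon'$ only shows that $\{g>g_\varepsilon\}$ is invariant under the backward flow. It does not show that $g$ ever enters this region. Moreover, $g\to\infty$ and $g'<0$ alone do not prevent $g$ from staying below $g_\varepsilon$; think of $|t|^{(p-1)/2}$.

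\textbf{The fix.} It uses your own computation.
\begin{itemize}
\item Suppose $g<g_\varepsilon$ on $(-\infty,T]$.
\item Then $t+(p-1)g^{1/(p-1)}<-\varepsilon|t|$, so $g'<\lambda_1(\sigma)-\varepsilon|t|g$.
\item Since $g\ge g(t_1)>0$ there, this gives $g'\le-\tfrac{\varepsilon}{2}|t|g$ for $t$ sufficiently negative.
\item Integrating yields $g(t)\ge c\,e^{\varepsilon t^2/4}$, which contradicts $g<g_\varepsilon=O(|t|^{p-1})$.
\end{itemize}
Hence $g\ge g_\varepsilon$ at some point, and by your barrier $g>g_\varepsilon$ on everything to the left of it. With this line added, (c) and therefore the whole proof are complete. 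Step (a) then becomes unnecessary, since only $g\ge g(t_1)>0$ is used.
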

\begin{proof} 
From equation \eqref{3.5},
a direct computation gives
\begin{align}\label{3.10}
    g'(t)&=\frac{(p-1)|w'|^{p-3}w'w''}{w^{p-1}}-(p-1)\frac{|w'|^{p-1}}{w^{p}}w'\nonumber\\
    &=\frac{1}{w^{p-1}}(\lambda_1(\sigma) w^{p-1}+t|w'|^{p-1})+(p-1)g^{\frac{p}{p-1}}\nonumber\\
    &=\lambda_1(\sigma) +tg(t)+(p-1)g(t)^{\frac{p}{p-1}}.
\end{align}
and consequently,
\begin{align}\label{g'' equation}
    g''=(t+pg(t)^{\frac{1}{p-1}})g'(t)+g(t).
\end{align}
We first show that $g(t)$ has at most one local minimum point in 
$(-\infty,\sigma]$.   Indeed, if  $t_{0}$ is a critical point of $g$, then \eqref{g'' equation} yields
\begin{align*}
    g''(t_{0})=g(t_{0})>0,
\end{align*}
so $t_{0}$ is necessarily a local minimum. Therefore,  $g$ has no local maximum points in $(-\infty,\sigma]$. If there were  two distinct local minima  $t_1< t_2$, then since $g'(t_i)=0$ and
$g''(t_i)>0$,  there would exist small 
 constants $\delta_1$ and $\delta_2$ small enough, such that $g'>0$ on  $ (t_{1}, t_{1}+\delta_1)$ and $g'<0$ on  $(t_{2}-\delta_2, t_{2})$. This would force the existence of a local  maximum  in between, a contradiction.

 Thus, only three  possibilities remain:
 {\bf Case 1:} $g'(t)>0$ for all $t\in I_\sigma$;
   {\bf Case 2:} $g'(t)\le 0$ for all $t\in I_\sigma$;
   {\bf Case 3:} There exists $t_0\in (-\infty, \sigma)$ such that
 $g'\le 0$ on $(-\infty, t_0)$ and $g'>0$ on $(t_0, \sigma)$. 
We now show that Cases 2 and 3 are impossible. Case 2 may be viewed as the limiting case of Case 3 with $t_0=\sigma$, so we treat them uniformly. Suppose $g'<0$ on $(-\infty, t_0)$, then 
$$
A:=\lim_{t\rightarrow -\infty}g(t)\in (0, +\infty].
$$
If $0<A<+\infty$,  then from \eqref{3.10} we have
$$
g'(t)\to -\infty, \quad \text{as $t \to -\infty$},
$$
which implies $g(t)\to +\infty$, contradicting with $0<A<+\infty$.
Hence $A=+\infty$.  Since
\begin{align*}
    \lambda_1(\sigma)& = \frac{\int_{-\infty}^{\sigma} |w'|^p e^{-\frac{t^2}{2}} \, dt + \beta |w(\sigma)|^pe^{-\frac{\sigma^2}{2}}}{\int_{-\infty}^{\sigma} w^p e^{\frac{-t^2}{2}} \, dt}\\
&= \frac{\int_{-\infty}^{t} |w'|^p e^{-\frac{t^2}{2}} \, dt + \int_{t}^{\sigma} |w'|^p e^{-\frac{t^2}{2}} \, dt + \beta |w(\sigma)|^p e^{-\frac{\sigma^2}{2}} }{\int_{-\infty}^{t} w^p e^{\frac{-t^2}{2}}\, dt + \int_{t}^{\sigma} w^p e^{\frac{-t^2}{2}}\, dt}
\end{align*}
 and for any $t\in I_\sigma$,
\begin{align*}
    \lim_{t\rightarrow-\infty}\frac{ \int_{t}^{\sigma} (w')^p e^{-\frac{t^2}{2}} \, dt + \beta w^p(\sigma) e^{-\frac{\sigma^2}{2}} }{\int_{t}^{\sigma} w^p e^{\frac{-t^2}{2}}\, dt}=\lambda_1(\sigma),
\end{align*}
there exists $t_{3}<\sigma$ such that for any $t<t_{3}$,
\begin{align}
    \frac{ \int_{t}^{\sigma} |w'|^p e^{-\frac{t^2}{2}} \, dt + \beta |w(\sigma)|^p e^{-\frac{\sigma^2}{2}} }{\int_{t}^{\sigma} w^p e^{\frac{-t^2}{2}}\, dt}<\lambda_1(\sigma)+1.
\end{align}
Now choose a positive constant  $K>0$ such that $K>\lambda_1(\sigma)+1$. Since  $A=+\infty$, there exists    $t_{4}$ such that 
\begin{align*}
        g(t)>K^{\frac{p-1}{p}} \quad \text{for any $t\leq t_{4}$},
\end{align*}
 hence
 $|w'(t)|>K^{\frac{1}{p}}w(t)$. Therefore, for $t\le t_4$
\begin{align*}
    \frac{\int_{-\infty}^{t} |w'|^p e^{-\frac{t^2}{2}} \, dt}{\int_{-\infty}^{t} w^p e^{-\frac{t^2}{2}} \,dt} > K.
\end{align*}
Let  $t_5=\min\{t_3,t_4\}$ and define
\begin{align*}
    a_{1}(t):=&\int_{t}^{\sigma} |w'|^p e^{-\frac{t^2}{2}} \, dt + \beta |w(\sigma)|^p e^{-\frac{\sigma^2}{2}},\\
     a_{2}(t):=&\int_{-\infty}^{t} |w'|^p e^{-\frac{t^2}{2}} \, dt,\\
     b_{1}(t):=&\int_{t}^{\sigma} w^p e^{\frac{-t^2}{2}}\, dt,\\
     b_2(t):=&\int_{-\infty}^{t} w^p e^{-\frac{t^2}{2}} \, dt.
\end{align*}
Then for  $t\leq t_5$,
\begin{align*}
   \frac{a_2(t)}{b_2(t)} > K, \quad \ell (t):= \frac{a_1(t)}{b_1(t)}<\lambda_1(\sigma)+1<K. 
\end{align*}
Therefore we conclude that
\begin{align*}
\lambda_1(\sigma)=\frac{a_1 + a_2}{b_1 + b_2} > \frac{K b_2 + \ell b_1}{b_2 + b_1} > \ell = \frac{a_1}{b_1}=\frac{\int_{t}^{\sigma} |w'|^p e^{-\frac{t^2}{2}} \, dt + \beta |w(\sigma)|^p e^{-\frac{\sigma^2}{2}}}{\int_{t}^{\sigma} w^p e^{\frac{-t^2}{2}}\, dt}.
\end{align*}
Choosing a testing function 
\begin{align*}
    \widetilde{w}(t) = 
\begin{cases} 
w(t), & t \in (t_5, \sigma), \\
w(t_5), & t \in (-\infty, t_5)
\end{cases}
\end{align*}
for $\l_1(\sigma)$, we have 
\begin{align*}
    \lambda_1(\sigma)\leq \frac{\int_{t_5}^{\sigma} |w'|^p e^{-\frac{t^2}{2}} \, dt + \beta |w(\sigma)|^p e^{-\frac{\sigma^2}{2}}}{\int_{t_5}^{\sigma} w^p e^{\frac{-t^2}{2}}\, dt+w(t_{5})^p\int_{-\infty}^{t_{5}}e^{-\frac{t^{2}}{2}}dt}< \frac{\int_{t_5}^{\sigma} |w'|^p e^{-\frac{t^2}{2}} \, dt + \beta |w(\sigma)|^p e^{-\frac{\sigma^2}{2}}}{\int_{t_5}^{\sigma} w^p e^{\frac{-t^2}{2}}\, dt}<\lambda_1(\sigma),
\end{align*}
which is a contradiction. This rules out Cases 2 and 3, so Case 1 must hold, i.e.,
 $g'(t)>0$.
\end{proof}
Using the strict increasing property of $g$, we derive the following comparison result.
\begin{proposition}\label{pr3.3}
        Let $r,\sigma \in \mathbb{R}$ with $r\leq \sigma$. Then $\lambda_{1}(r)\geq \lambda_{1}(\sigma)$. 
\end{proposition}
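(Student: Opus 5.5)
Let $w_\sigma$ be the positive, strictly decreasing first eigenfunction on $I_\sigma$ from Proposition \ref{Theorem2}, and let $g_\sigma=|w_\sigma'|^{p-1}/w_\sigma^{p-1}$ be the function of Proposition \ref{monotone of beta prop}. The plan is to use the restriction of $w_\sigma$ to $I_r$ as a test function in the variational formula \eqref{3.1} for $\lambda_1(r)$. Before that, I reduce to $r<\sigma$, since $r=\sigma$ is trivial, and note that $w_\sigma$ restricted to $I_r$ belongs to $W^{1,p}(I_r,\phi_1)$. The restricted function satisfies the same equation on $I_r$. Its boundary flux at $t=r$ is
\begin{align*}
|w_\sigma'|^{p-2}w_\sigma'(r)=-g_\sigma(r)\,w_\sigma(r)^{p-1},
\end{align*}
and $0<g_\sigma(r)<g_\sigma(\sigma)=\beta$ by the strict monotonicity in Proposition \ref{monotone of beta prop}. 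So $w_\sigma|_{I_r}$ is an eigenfunction on $I_r$ with eigenvalue $\lambda_1(\sigma)$, but with a smaller Robin parameter $g_\sigma(r)<\beta$.

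Next I compute the Rayleigh quotient of $w_\sigma|_{I_r}$ using the weak form, that is, multiplying the equation by $w_\sigma$ and integrating over $(-\infty,r)$. By \eqref{3.6}, the boundary term at $-\infty$ vanishes. I expect this step to need the most care: \eqref{3.6} says $|w'|^{p-1}e^{-t^2/2}\to0$, but the product with $w$ also has to vanish, and $w$ may be unbounded. To handle it, I would integrate on $(T,r)$ and let $T\to-\infty$. Along a sequence, $|w'|^{p-1}w\,e^{-t^2/2}\to0$ holds because both $|w'|^p\phi_1$ and $w^p\phi_1$ are integrable: by Young's inequality the product is integrable, so its liminf must be $0$. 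The left side is monotone in $T$ since all integrands are nonnegative, so passing to the limit along this sequence suffices. The result is
\begin{align*}
\int_{-\infty}^{r}|w_\sigma'|^p\phi_1\,dt+g_\sigma(r)\,w_\sigma(r)^p\phi_1(r)=\lambda_1(\sigma)\int_{-\infty}^{r}w_\sigma^p\phi_1\,dt .
\end{align*}

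From here I conclude by comparing Rayleigh quotients. Inserting $w_\sigma|_{I_r}$ into \eqref{3.1} on $I_r$ gives
\begin{align*}
\lambda_1(r)\le \frac{\int_{-\infty}^{r}|w_\sigma'|^p\phi_1\,dt+\beta\,w_\sigma(r)^p\phi_1(r)}{\int_{-\infty}^{r}w_\sigma^p\phi_1\,dt}
=\lambda_1(\sigma)+\frac{(\beta-g_\sigma(r))\,w_\sigma(r)^p\phi_1(r)}{\int_{-\infty}^{r}w_\sigma^p\phi_1\,dt},
\end{align*}
and the last term is positive. This is an upper bound on $\lambda_1(r)$, which is the wrong direction. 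So the test-function argument must go the other way: take $w_r$, the eigenfunction on $I_r$, and build a test function for $\lambda_1(\sigma)$ on $I_\sigma$.

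The approach I would actually carry out is a comparison of the logarithmic derivatives $g_r$ and $g_\sigma$ on $I_r$. Both satisfy the Riccati equation \eqref{3.10}, $g'=\lambda+tg+(p-1)g^{p/(p-1)}$, with $\lambda=\lambda_1(r)$ and $\lambda=\lambda_1(\sigma)$ respectively. At $t=r$ the initial data satisfy $g_r(r)=\beta>g_\sigma(r)$. Suppose, for contradiction, that $\lambda_1(r)<\lambda_1(\sigma)$. Let $h=g_r-g_\sigma$, so $h(r)>0$. Integrating backward from $r$, I compute
\begin{align*}
h'=(\lambda_1(r)-\lambda_1(\sigma))+th+(p-1)\bigl(g_r^{p/(p-1)}-g_\sigma^{p/(p-1)}\bigr).
\end{align*}
At any first point (going leftward) where $h=0$, this gives $h'<0$, which is incompatible with $h$ decreasing to $0$ going left. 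So $h>0$ on all of $I_r$, i.e. $g_r>g_\sigma$ everywhere there. It remains to get a contradiction from this ordering, which I expect to be the main obstacle. The condition to exploit is the one that selects the admissible solution at $-\infty$: both $g$'s stay finite and tend to $0$ in the way forced by \eqref{3.6} and the integrability, as in the Case 2/3 argument of Proposition \ref{monotone of beta prop}. A strictly larger $g$ solving the Riccati equation with a smaller $\lambda$ should blow up or fail to decay as $t\to-\infty$. To show this, I would use $w_r/w_\sigma$, which is monotone increasing because $g_r>g_\sigma$. I would then apply a Picone-type identity for the $p$-Laplacian with weight $\phi_1$ on $I_r$, testing with $w_r^p/w_\sigma^{p-1}$, to derive $\lambda_1(r)-\lambda_1(\sigma)\ge(\beta-g_\sigma(r))(\cdot)>0$. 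That is the desired contradiction, and in fact it directly gives $\lambda_1(r)\ge\lambda_1(\sigma)$.
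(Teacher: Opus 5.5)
Your final argument is correct, but it is not the paper's route. The paper uses only $w_\sigma$. It observes that $w_\sigma|_{I_r}$ solves the problem on $I_r$ with Robin parameter $g(r)$. It then \emph{asserts} that this makes it the first eigenfunction of that problem, so $\lambda_1(\sigma)=\lambda_1^{g(r)}(r)$. It concludes with $\lambda_1^{g(r)}(r)\le\lambda_1^{\beta}(r)=\lambda_1(r)$, which is immediate from the Rayleigh quotient since $g(r)\le g(\sigma)=\beta$.

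Your first computation is exactly the identity the paper uses. What it lacks, and what the paper takes for granted, is the standard but unstated fact that a positive eigenfunction must belong to the first eigenvalue. Your Picone step supplies this directly. Integrate $|w_r'|^p\ge|w_\sigma'|^{p-2}w_\sigma'\,(w_r^p/w_\sigma^{p-1})'$ against $\phi_1$ over $(T,r)$, and use the equation for $w_\sigma$ together with $|w_\sigma'|^{p-2}w_\sigma'=-g_\sigma w_\sigma^{p-1}$. This gives
\begin{align*}
\int_T^r|w_r'|^p\phi_1\,dt\ge\lambda_1(\sigma)\int_T^r w_r^p\phi_1\,dt-g_\sigma(r)\,w_r(r)^p\phi_1(r)+g_\sigma(T)\,w_r(T)^p\phi_1(T).
\end{align*}
The last term is nonnegative and can be dropped, so no decay information at $-\infty$ is needed. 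Letting $T\to-\infty$ and adding $\beta w_r(r)^p\phi_1(r)$ to both sides yields
\begin{align*}
\bigl(\lambda_1(r)-\lambda_1(\sigma)\bigr)\int_{-\infty}^r w_r^p\phi_1\,dt\ge\bigl(\beta-g_\sigma(r)\bigr)\,w_r(r)^p\phi_1(r)\ge 0.
\end{align*}
The inequality is strict for $r<\sigma$, since $g_\sigma(r)<\beta$ and $w_r(r)>0$. So your route is self-contained and quantitative, whereas the paper's is a two-line argument resting on that unstated lemma.

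You should strip the scaffolding, though. The contradiction hypothesis and the Riccati comparison $g_r>g_\sigma$ play no role in the Picone step. Your claim that $w_r/w_\sigma$ is increasing also has the wrong sign. Since $w'<0$, we have $\bigl(\log(w_r/w_\sigma)\bigr)'=g_\sigma^{1/(p-1)}-g_r^{1/(p-1)}$, which is negative when $g_r>g_\sigma$. The ratio is therefore decreasing and could not bound anything near $-\infty$. What makes the argument work is the favorable sign of the boundary term at $T$.
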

\begin{proof}
    Let $w$ be the eigenfunction corresponding to $\lambda_{1}(\sigma)$.    
    Then
\begin{align*}
\lambda_{1}(\sigma)=\frac{\int_{-\infty}^{\sigma}|w'|^{p}e^{-\frac{t^{2}}{2}}dt+g(\sigma)|w(\sigma)|^{p}e^{-\frac{\sigma^{2}}{2}}}{\int_{-\infty}^{\sigma}|w|^{p}e^{-\frac{t^{2}}{2}}dt}.
\end{align*}
Moreover, $w$ is also the first eigenfunction of one-dimensional $p$-Hermite operator on $I_r$ with Robin parameter $g(r)$, whose first eigenvalue we denoted by $\l_1^{g(r)}(r)$. Thus
    \begin{align*}
         \lambda_{1}^{g(r)}(r)=&\frac{\int_{-\infty}^{r}|w'|^{p}e^{-\frac{t^{2}}{2}}dt+g(r)|w(r)|^{p}e^{-\frac{r^{2}}{2}}}{\int_{-\infty}^{r}|w|^{p}e^{-\frac{t^{2}}{2}}dt}=\l_1(\sigma).
    \end{align*}
Since $g$ is increasing in $r$ and  $\l_1^{\alpha}$ is monotone increasing in  the Robin parameter $\alpha$, we obtain
   \begin{align*}
         \lambda_{1}(\sigma)
         = \lambda_{1}^{g(r)}(r)
         \le \lambda_{1}^{g(\sigma)}(r)
         =\lambda_{1}(r),
    \end{align*}
which proves  the proposition.
\end{proof}

\section{Representation formula for the first eigenvalue}\label{sect4}
Let $\Omega \in \mathcal{G}$, with $\mathcal{G}$ as in Definition \ref{Def 2.1} and let $\psi$ be the first positive eigenfunction
of \eqref{1.2} normalized by $\|\psi\|_{L^p(\Omega,\phi )} = 1$. For each $t > 0$, we introduce the following notation:
\begin{align*}
U_t &:= \{x \in \Omega: \psi(x) > t\}, \\
\partial U_t^{\mathrm{int}} &:= \{x \in \Omega: \psi(x) = t\}, \\
\partial U_t^{\mathrm{ext}} &:= \{x \in \partial \Omega: \psi(x) > t\}.
\end{align*}
For a measurable function $\varphi: \Omega \to [0, \infty)$, we define the functional
\begin{align}\label{4.1}
    \mathcal{F}_\Omega(U_t, \varphi) =: \frac{1}{\gamma(U_t)} \Bigg[
- (p-1)\int_{U_t} \varphi^{p/(p-1)} \phi  \, dx
+ \int_{\partial U_t^{\text{int}}} \varphi \phi  \, dA
+ \beta \int_{\partial U_t^{\text{ext}}} \phi  \, dA
\Bigg],
\end{align}
where $\gamma(U_t):=\int_{U_t}\phi\, dx$ denotes the Gaussian measure of $U_t$.

\begin{proposition}\label{prop4.1}
    Let $\psi > 0$ be the eigenfunction   corresponding to $\lambda_1(\Omega)$. Then
\begin{align*}
\lambda_1(\Omega) = \mathcal{F}_\Omega\left(U_t, \frac{|\nabla \psi|^{p-1}}{\psi^{p-1}}\right)
\end{align*}
for almost every $t \in (0, +\infty)$.
\end{proposition}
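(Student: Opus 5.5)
The plan is to follow the Bossel--Daners level-set computation, adapted to the weight $\phi$. Write $\varphi:=|\nabla\psi|^{p-1}/\psi^{p-1}$ and test the weak equation \eqref{weak form} against a truncation of $\psi^{1-p}$ restricted to $U_t$.

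Concretely, for $t>0$ and $\epsilon>0$ small, I will use
\[
v_\epsilon:=\min\Bigl\{\tfrac{1}{\epsilon}(\psi-t)^+,1\Bigr\}\,\psi^{1-p}.
\]
Since $\psi\ge t$ on the support of $v_\epsilon$, this function is bounded and lies in $W^{1,p}(\Omega,\phi)$. Note that $\psi^{1-p}$ is locally Lipschitz in $\psi$ on $\{\psi>t\}$, and $\psi\in C^1(\Omega)\cap C(\overline\Omega)$ by Lemma \ref{lm2.1}. Inserting $v_\epsilon$ into \eqref{weak form} splits the integrals into two regions.

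On $\{\psi>t+\epsilon\}$ we have
\[
|\nabla\psi|^{p-2}\nabla\psi\cdot\nabla(\psi^{1-p})=-(p-1)\frac{|\nabla\psi|^p}{\psi^p}=-(p-1)\varphi^{p/(p-1)} .
\]
The boundary term becomes $\beta\int_{\partial\Omega}\min\{\dots\}\phi\,dA$, and the right side is $\lambda_1\int\min\{\dots\}\phi\,dx$. On the strip $\{t<\psi<t+\epsilon\}$ an extra term appears:
\[
\frac{1}{\epsilon}\int_{\{t<\psi<t+\epsilon\}}|\nabla\psi|^p\psi^{1-p}\phi\,dx .
\]

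Next I let $\epsilon\to0$. Dominated and monotone convergence give convergence to integrals over $U_t$ and over $\partial U_t^{\rm ext}$. The strip term is handled by the coarea formula, which rewrites it as
\[
\frac{1}{\epsilon}\int_t^{t+\epsilon}\int_{\{\psi=s\}}|\nabla\psi|^{p-1}s^{1-p}\phi\,d\mathcal H^{n-1}\,ds .
\]
By the Lebesgue differentiation theorem this converges, for a.e.\ $t$, to $\int_{\partial U_t^{\rm int}}\varphi\,\phi\,dA$. Collecting terms gives
\[
-(p-1)\int_{U_t}\varphi^{p/(p-1)}\phi+\int_{\partial U_t^{\rm int}}\varphi\phi+\beta\int_{\partial U_t^{\rm ext}}\phi=\lambda_1\gamma(U_t),
\]
and dividing by $\gamma(U_t)$ yields the claim. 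When $\gamma(U_t)=0$, i.e.\ $t\ge\sup\psi$, the statement is vacuous or is interpreted trivially, and those $t$ are excluded. The restriction to $\{\psi<\sup\psi\}$ is where "a.e.\ $t$" enters.

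The main obstacle is justifying the limits without an $L^\infty$ bound and with $\Omega$ possibly unbounded. I need to check three things:
\begin{itemize}
\item $\varphi^{p/(p-1)}=|\nabla\psi|^p/\psi^p$ is integrable on $U_t$. This holds because $\psi>t$ there, so it is bounded by $t^{-p}|\nabla\psi|^p\in L^1(\phi)$; monotone convergence then applies.
\item The coarea function $s\mapsto\int_{\{\psi=s\}}|\nabla\psi|^{p-1}s^{1-p}\phi$ is locally integrable on $(0,\infty)$. This follows from $\int_{\{\psi>t\}}|\nabla\psi|^p\psi^{1-p}\phi\le t^{1-p}\|\nabla\psi\|_p^p<\infty$.
\item The level sets $\{\psi=s\}$ carry no mass where $\nabla\psi=0$ in a way that affects the computation. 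The contribution of critical points vanishes in the coarea formula.
\end{itemize}
Finally, the trace of $v_\epsilon$ on $\partial\Omega$ is the pointwise expression, because $\psi$ is continuous up to the boundary. This ensures the boundary term converges to $\beta\,\mathcal H^{n-1}_\phi(\partial U_t^{\rm ext})$ by dominated convergence, using $\phi\in L^1(\partial\Omega)$ via the trace property of $\mathcal G$ applied to $\psi\,\psi^{-1}$ on $\{\psi>t\}$.
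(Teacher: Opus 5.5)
Your argument is correct and is essentially the paper's proof: the same test function $\psi^{1-p}\min\{1,(\psi-t)^+/\epsilon\}$, the coarea formula on the strip $\{t<\psi<t+\epsilon\}$ followed by Lebesgue differentiation for a.e.\ $t$, and monotone convergence for the bulk, boundary and right-hand-side terms. Your product-rule split, which keeps $-(p-1)\min\{\cdot\}\,|\nabla\psi|^p\psi^{-p}$ on all of $U_t$, is slightly cleaner than the paper's, since the strip term becomes an exact average $\frac{1}{\epsilon}\int_t^{t+\epsilon}$ without the factor $(p-1)t/\psi-p+2$; note only that the ``a.e.\ $t$'' comes from that differentiation step, not from discarding $t\ge\sup\psi$ (which is not a null set when $\psi$ is bounded).
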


\begin{proof}
    We follow the argument in \cite{BucurCV}.
Fix $t \in (0,+\infty)$ and let $\varepsilon \in (0,t)$. Define the function
\begin{align*}
\varphi_\varepsilon := \frac{1}{\psi^{p-1}} \min\left\{ 1, \left( \frac{\psi - t}{\varepsilon} \right)^+ \right\}.
\end{align*}
Then $\varphi_\varepsilon$ 	 converges pointwise as
$\varepsilon \to 0^+$, and
\begin{equation}\label{4.2}
\varphi_\varepsilon \rightarrow \frac{1}{\psi^{p-1}} {\bf 1}_{U_t} \quad \text{as 
$\varepsilon \to 0^+$},
\end{equation}
where ${\bf 1}_{U_t}$ denotes the indicator function of $U_t$. Moreover, $\varphi_\varepsilon \in W^{1,p}(\Omega,\phi )$ and 
\begin{align*}
\nabla \varphi_\varepsilon =
\begin{cases}
-(p-1) \dfrac{\nabla \psi}{|\psi|^p},
& \text{if } \psi > t + \varepsilon \ , \\[1em]
\dfrac{1}{\varepsilon} \left( (p-1)\dfrac{t}{\psi} - p + 2 \right) \dfrac{\nabla \psi}{\psi^{p-1}},
& \text{if } t < \psi < t + \varepsilon, \\[1em]
0, & \text{otherwise}.
\end{cases}
\end{align*}
We now examine each term in the weak formulation  \eqref{weak form} with $u = \psi$ and $v = \varphi_\varepsilon$. For  the gradient term, we obtain
\begin{align*}
&\int_\Omega |\nabla\psi|^{p-2} \nabla\psi \cdot \nabla\varphi_\varepsilon  \phi \, dx\\
=& -(p-1) \int_{U_{t+\varepsilon}} \frac{|\nabla\psi|^p}{\psi^p}\phi  \, dx+ \frac{1}{\varepsilon} \int_{U_t \setminus U_{t+\varepsilon}} \left( (p-1)\frac{t}{\psi} - p + 2 \right) \frac{|\nabla\psi|^p}{\psi^{p-1}} \phi \, dx
\end{align*}
for all $0 < \varepsilon < t$. Applying the coarea formula to the second integral yields
\begin{align*}
&\int_{U_t \setminus U_{t+\varepsilon}} \left( (p-1)\frac{t}{\psi} - p + 2 \right) \frac{|\nabla\psi|^p}{\psi^{p-1}} \phi \, dx\\
=& \int_t^{t+\varepsilon} \left( (p-1)\frac{t}{\tau} - p + 2 \right) \int_{\{\psi=\tau\}} \frac{|\nabla\psi|^{p-1}}{\psi^{p-1}} \phi \,dA \, d\tau
\end{align*}

   Since $\psi \in W^{1,p}(\Omega, \phi )$,   the function
\begin{align*}
s \mapsto \int_t^s \left( (p-1)\frac{t}{\tau} - p + 2 \right) \int_{\{\psi=\tau\}} \frac{|\nabla \psi|^{p-1}}{\psi^{p-1}} \phi\, dA d\tau
\end{align*}
is locally absolutely continuous on $(0,+\infty)$. Using differentiability almost everywhere, we obtain, as $\varepsilon \to 0^+$,
\begin{align*}
\begin{split}
&\frac{1}{\varepsilon} \int_{U_t \setminus U_{t+\varepsilon}} \left( (p-1)\frac{t}{\psi} - p + 2 \right) \frac{|\nabla \psi|^p}{\psi^{p-1}}\phi  dx \\
\to \quad &\left( (p-1)\frac{t}{t} - p + 2 \right) \int_{\{\psi=\tau\}} \frac{|\nabla \psi|^{p-1}}{\psi^{p-1}}\phi dA = \int_{\{\psi=\tau\}} \frac{|\nabla \psi|^{p-1}}{\psi^{p-1}} \phi\, dA
\end{split}
\end{align*}
for almost all $t \in (0,+\infty)$.

For the boundary term, using \eqref{weak form} and  the dominated convergence theorem, we get
\begin{align*}
\int_{\partial \Omega} \beta |\psi|^{p-2} \psi \varphi_\varepsilon \phi dA \to \int_{\partial \Omega \cap U_{t}} \beta \phi dA.
\end{align*}
Similarly,
\begin{align*}
\int_{\Omega} |\psi|^{p-2} \psi \varphi_\varepsilon \phi \, dx \to \int_{U_t}  \phi (x)\, dx = \gamma(U_t).
\end{align*}
Letting $\varepsilon \to 0$ in  the weak identity
\begin{align*}
\int_{\Omega} |\nabla \psi|^{p-2} \nabla \psi \cdot \nabla \varphi_{\varepsilon} \phi\, dx + \int_{\partial \Omega} \beta |\psi|^{p-2} \psi \varphi_{\varepsilon} \phi \,dA = \lambda_1(\Omega) \int_{\Omega} |\psi|^{p-2} \psi \varphi_{\varepsilon} \phi\, dx,
\end{align*}
we obtain
\begin{align*}
-(p-1) \int_{U_t} \frac{|\nabla \psi|^p}{\psi^p}\phi \, dx + \int_{\partial U_t^{\mathrm{int}}} \frac{|\nabla \psi|^{p-1}}{\psi^{p-1}}\phi  \,dA + \int_{\p U_t^{\mathrm{ext}}} \beta \phi \,dA = \lambda_1(\Omega) \gamma(U_t)
\end{align*}
for almost every $t \in (0, +\infty)$.  Rearranging terms yields the desired formula. \end{proof}

\begin{proposition}\label{prop4.2}
Let $\varphi : \Omega \to [0, \infty)$ be a measurable function such that $\varphi \in L^{p'}(U_t,\phi )$ for all $t > 0$, where $p'=p/(p-1)$. Define
\begin{align*}
w := \varphi - \frac{|\nabla \psi|^{p-1}}{\psi^{p-1}},
\qquad
F(t) := \int_{U_t} w \frac{|\nabla \psi|}{\psi} \phi \, dx.
\end{align*}
Then $F : (0,+\infty) \to \mathbb{R}$ is locally absolutely continuous, and
\begin{align}\label{ineqaulity about lambda}
    \mathcal{F}_{\Omega}(U_t, \varphi) \leq \lambda_1(\Omega) - \frac{1}{\gamma (U_t) t^{p-1}} \frac{d}{dt} \left( t^p F(t) \right). 
\end{align}
\end{proposition}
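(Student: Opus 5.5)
Throughout, abbreviate $\Psi:=|\nabla\psi|^{p-1}/\psi^{p-1}$, so that $\varphi=\Psi+w$ and $\Psi^{1/(p-1)}=|\nabla\psi|/\psi$. The plan is to compare $\mathcal F_\Omega(U_t,\varphi)$ with $\mathcal F_\Omega(U_t,\Psi)$, which equals $\lambda_1(\Omega)$ for a.e. $t$ by Proposition \ref{prop4.1}. The boundary term $\beta\int_{\partial U_t^{\rm ext}}\phi\,dA$ is the same in both and cancels. So only the volume term and the interior level-set term need to be controlled.

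\textbf{Step 1: $F$ is well defined and locally absolutely continuous.} I first check that $w\,|\nabla\psi|/\psi\in L^1(U_t,\phi)$ for every $t>0$. On $U_t$ we have $\psi>t$, hence $|\nabla\psi|/\psi\le |\nabla\psi|/t$, which lies in $L^p(U_t,\phi)$ since $\psi\in W^{1,p}(\Omega,\phi)$. By H\"older, $\varphi\,|\nabla\psi|/\psi\in L^1$ because $\varphi\in L^{p'}(U_t,\phi)$. Also $\Psi\,|\nabla\psi|/\psi=|\nabla\psi|^p/\psi^p\le t^{-p}|\nabla\psi|^p$ is integrable. Then the coarea formula gives
\begin{align*}
F(t)=\int_t^{\infty}\frac{1}{\tau}\int_{\{\psi=\tau\}} w\,\phi\,dA\,d\tau .
\end{align*}
The critical set $\{\nabla\psi=0\}$ contributes nothing, since the integrand carries the factor $|\nabla\psi|$. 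Hence $F$ is locally absolutely continuous on $(0,\infty)$, and for a.e. $t$
\begin{align*}
-tF'(t)=\int_{\partial U_t^{\rm int}} w\,\phi\,dA .
\end{align*}

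\textbf{Step 2: convexity of the volume term.} The map $s\mapsto s^{p'}$ is convex on $[0,\infty)$, so
\begin{align*}
\varphi^{p'}\ge \Psi^{p'}+p'\Psi^{1/(p-1)}w=\Psi^{p'}+p'\frac{|\nabla\psi|}{\psi}w .
\end{align*}
Multiplying by $-(p-1)$ and integrating over $U_t$ against $\phi$, and using $(p-1)p'=p$, gives
\begin{align*}
-(p-1)\int_{U_t}\varphi^{p'}\phi\,dx\le -(p-1)\int_{U_t}\Psi^{p'}\phi\,dx-pF(t).
\end{align*}
For the interior level-set term, $\int_{\partial U_t^{\rm int}}\varphi\phi\,dA=\int_{\partial U_t^{\rm int}}\Psi\phi\,dA-tF'(t)$ by Step 1.

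\textbf{Step 3: combine.} Adding the two pieces and the common boundary term, and invoking Proposition \ref{prop4.1}, for a.e. $t$ we get
\begin{align*}
\gamma(U_t)\,\mathcal F_\Omega(U_t,\varphi)\le \gamma(U_t)\lambda_1(\Omega)-pF(t)-tF'(t)=\gamma(U_t)\lambda_1(\Omega)-t^{1-p}\frac{d}{dt}\bigl(t^pF(t)\bigr).
\end{align*}
Dividing by $\gamma(U_t)$ gives \eqref{ineqaulity about lambda}. The inequality is understood for a.e. $t$, which is how it will be used later.

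\textbf{Main obstacle.} I expect the delicate point to be the justification in Step 1: the coarea identification of $F'(t)$ with the level-set integral. This needs $\{\psi=t\}$ to have the right $\mathcal H^{n-1}$ measure for a.e. $t$, and local integrability near $\partial\Omega$ without $L^\infty$ bounds on $\psi$. I would handle it exactly as in the proof of Proposition \ref{prop4.1}: restrict to a.e. $t$ (Lebesgue points of the absolutely continuous function), and use $\psi\in C^1(\Omega)$ from Lemma \ref{lm2.1}.
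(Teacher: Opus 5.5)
Your proposal is correct and follows the paper's proof essentially step for step. Both arguments apply the tangent-line inequality for $s\mapsto s^{p'}$ at $\Psi=|\nabla\psi|^{p-1}/\psi^{p-1}$, use the coarea identity $-tF'(t)=\int_{\partial U_t^{\mathrm{int}}} w\,\phi\,dA$, invoke Proposition \ref{prop4.1} to replace $\mathcal F_\Omega(U_t,\Psi)$ by $\lambda_1(\Omega)$, and combine via $pF+tF'=t^{1-p}(t^pF)'$. Your Step 1 also supplies the integrability of $w\,|\nabla\psi|/\psi$ on $U_t$ (via $\psi>t$ and H\"older) that the paper leaves implicit, and you rightly note that the inequality holds for a.e.\ $t$.
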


\begin{proof}
From Proposition \ref{prop4.1}, we have
    \begin{align*}
        \mathcal{F}_{\Omega}(U_{t}, \varphi)
= \lambda_{1}(\Omega)
+ \frac{1}{\gamma (U_t)}
\left(
\int_{\partial U_t^{\mathrm{int}}} w \phi \,dA
- (p-1) \int_{U_{t}} (\varphi^{p'} - \frac{|\nabla \psi|^{p}}{\psi^{p}})\phi  \, dx
\right).
    \end{align*}
Applying the elementary inequality
    \begin{align*}
        x^{p'}-v^{p'}\geq p'v^{p'-1}(x-v)
    \end{align*}
    with \( v := |\nabla \psi|^{p-1}/\psi^{p-1} \)  and \(x:=\varphi\), we obtain
\begin{align*}
\varphi^{p'} - \frac{|\nabla \psi|^p}{\psi^p} \geq \frac{p}{p-1} w \frac{|\nabla \psi|}{\psi}. 
\end{align*}
with equality if and only if $w = 0 $. Therefore,
\begin{align*}
\mathcal{F}_{\Omega}(U_t, \varphi) \leq \lambda_1(\Omega) + \frac{1}{\gamma (U_t)}
\left(
\int_{\partial U_t^{\mathrm{int}}} w \phi \,dA
- p \int_{U_t} w \frac{|\nabla \psi|}{\psi} \phi \, dx
\right).
\end{align*}
By coarea formula, we have 
\begin{align*}
    \frac{d}{dt}F(t)&=\frac{d}{dt}\int_{t}^{\infty}\int_{\partial U_s^{\mathrm{int}}}\frac{w}{\psi}\phi \, dA ds=-\frac{1}{t}\int_{\partial U_t^{\mathrm{int}}}w\phi  \, dA.
\end{align*}
Hence 
\begin{align*}
    \int_{\partial U_t^{\mathrm{int}}} w \phi \,dA
- p \int_{U_t} w \frac{|\nabla \psi|}{\psi} \phi \, dx&=-t\frac{dF}{dt}-pF(t)=-\frac{1}{t^{p-1}}\frac{d}{dt}(t^{p}F(t)).
\end{align*}
Substituting this into the previous estimate yields 
\begin{align*}
     \mathcal{F}_{\Omega}(U_t, \varphi) \leq \lambda_1(\Omega) - \frac{1}{\gamma (U_t) t^{p-1}} \frac{d}{dt} \left( t^p F(t) \right),
\end{align*}
proving \eqref{ineqaulity about lambda}.
\end{proof}
We now derive a lower bound for  $\lambda_1(\Omega)$. 
\begin{proposition} \label{theorem3}
   Let $\Omega \in \mathcal{G}$, with $\mathcal{G}$ as in Definition \ref{Def 2.1}, and let $\psi$ be as in Proposition \ref{prop4.1}.
Let $\varphi \in L^{p/(p-1)}(\Omega, \phi )$ be a nonnegative function such that $\varphi \neq |\nabla \psi|^{p-1}/\psi^{p-1}$ on a set of positive measure, and let $\mathcal{F}_\Omega$ be as in \eqref{4.1}. Then there exists a set $T \subset (0, +\infty)$ of positive Lebesgue measure such that for every $t \in T$, 
\begin{align*}
\lambda_1(\Omega) \geq \mathcal{F}_\Omega(U_t, \varphi). 
\end{align*}
\end{proposition}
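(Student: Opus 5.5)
We argue by contradiction, in the spirit of Bucur--Daners, using Proposition \ref{prop4.2}. Set $w:=\varphi-|\nabla\psi|^{p-1}/\psi^{p-1}$ and $F(t)=\int_{U_t} w\,\frac{|\nabla\psi|}{\psi}\phi\,dx$ as there. If the conclusion failed, then $\lambda_1(\Omega)<\mathcal F_\Omega(U_t,\varphi)$ for almost every $t>0$. By \eqref{ineqaulity about lambda} this gives $\frac{d}{dt}(t^pF(t))<0$ for a.e. $t$. Since $t^pF(t)$ is locally absolutely continuous, it is then strictly decreasing on $(0,\infty)$.

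The plan is to contradict this monotonicity by examining the two ends of $(0,\infty)$.

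\textbf{Step 1: the limit as $t\to+\infty$.} We want $\liminf_{t\to\infty} t^pF(t)\ge 0$, or at least that $t^pF(t)$ cannot stay below a fixed negative constant. The positive part of $w$ contributes $\ge 0$, so only the negative part matters. Since $\varphi\ge0$, we have
\[
w\frac{|\nabla\psi|}{\psi}\ge -\frac{|\nabla\psi|^p}{\psi^p}, \qquad\text{so}\qquad t^pF(t)\ge -t^p\int_{U_t}\frac{|\nabla\psi|^p}{\psi^p}\phi\,dx .
\]
On $U_t$ we have $t^p\le\psi^p$, hence
\[
t^pF(t)\ge -\int_{U_t}|\nabla\psi|^p\phi\,dx\to0 \quad\text{as } t\to\infty,
\]
because $\psi\in W^{1,p}(\Omega,\phi)$ and $\gamma(U_t)\to0$. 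Thus $\liminf_{t\to\infty}t^pF(t)\ge0$.

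\textbf{Step 2: the limit as $t\to0^+$.} We want $\limsup_{t\to0}t^pF(t)\le 0$. Since $|\nabla\psi|^{p-1}/\psi^{p-1}\ge0$, we have $w\le\varphi$, and so
\[
t^pF(t)\le t^p\int_{U_t}\varphi\frac{|\nabla\psi|}{\psi}\phi\,dx \le \int_{U_t}\varphi\, t^{p-1}|\nabla\psi|\phi\,dx,
\]
using $t/\psi\le1$ on $U_t$. By H\"older's inequality the last integrand is controlled by $\|\varphi\|_{L^{p'}}\|\nabla\psi\|_{L^p}$, which is finite because $\varphi\in L^{p'}(\Omega,\phi)$. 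The factor $t^{p-1}\to0$ then gives $t^pF(t)\to0$. The requirement $\varphi\in L^{p'}$ on all of $\Omega$, stronger than in Proposition \ref{prop4.2}, is exactly what makes this step work.

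\textbf{Step 3: contradiction.} Combining the two limits, a strictly decreasing function would have limit $\le0$ at $0^+$ and limit $\ge0$ at $+\infty$. Strict decrease forces $t^pF(t)<0$ and $t^pF(t)>0$ simultaneously for intermediate $t$, which is impossible unless $t^pF\equiv0$. That case contradicts strict decrease.

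\textbf{Main obstacle.} Strictness needs care. Proposition \ref{prop4.2} only gives $\le$, so strict decrease on a positive-measure set comes from the assumed strict inequality holding a.e. That is enough, since a function that is nonincreasing and strictly decreasing somewhere, with limits $0$ at $0^+$ and $\ge 0$ at infinity, is impossible. If instead one negates to "$\ge$ a.e. with equality", then equality in Proposition \ref{prop4.2} forces $w=0$ where $|\nabla\psi|\neq0$, and we would also need to handle the set $\{\nabla\psi=0\}$. I expect the step needing most care to be justifying that the negation of the statement is "strict inequality for a.e. $t$". This follows because $T$ of positive measure failing means $\lambda_1<\mathcal F$ for a.e. $t$, so it is direct. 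The hypothesis $\varphi\neq|\nabla\psi|^{p-1}/\psi^{p-1}$ is then used only to exclude the degenerate case.
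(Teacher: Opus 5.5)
Your proposal is correct and follows essentially the paper's route: you argue by contradiction, use Proposition \ref{prop4.2} to make $G(t)=t^pF(t)$ monotone, and compare its two ends, using the H\"older bound $G(t)\le t^{p-1}\|\varphi\|_{p'}\|\nabla\psi\|_p$ at $0^+$ and the bound $G(t)\ge-\int_{U_t}|\nabla\psi|^p\phi\,dx\to0$ at $+\infty$. The only difference is that you read off $G'<0$ a.e.\ directly from the negated conclusion, so strict monotonicity gives the contradiction at once and the hypothesis $\varphi\neq|\nabla\psi|^{p-1}/\psi^{p-1}$ is never actually needed; the paper instead first uses only $G'\le0$, deduces $G\equiv0$, and then invokes that hypothesis to recover strictness near $0$ (also, your Step 2 justifies only $\limsup_{t\to0^+}t^pF(t)\le0$, not $t^pF(t)\to0$, but that is all you use).
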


\begin{proof}
    Suppose, to the contrary, that
    $$\lambda_{1}(\Omega)< \mathcal{F}_{\Omega}(U_{t},\varphi)$$ for almost every $t>0$.
Then by Proposition \ref{prop4.2},
\begin{align}\label{4.4}
    \lambda_{1}(\Omega)<\mathcal{F}_{\Omega}(U_{t},\varphi)\leq \lambda_1(\Omega) - \frac{1}{\gamma (U_t) t^{p-1}} \frac{d}{dt} \left( t^p F(t) \right)
\end{align}
  for almost every $t>0$.  Let $G(t) := t^p F(t)$. From \eqref{4.4}, we have in particular
\begin{align*}
G'(t) = \frac{d}{dt} \left(t^p F(t)\right) \leq 0
\end{align*}
for almost every $t>0$. Hence $G$ is nonincreasing on $(0,+\infty)$.  By definition of $F$ and $w$, and H\"older's inequality, we have
\begin{align*}
F(t) &= \int_{U_t} w \frac{|\nabla \psi|}{\psi} \phi \, dx
= \int_{U_t} \varphi \frac{|\nabla \psi|}{\psi}\phi  \, dx - \int_{U_t} \frac{|\nabla \psi|^p}{\psi^p} \phi \, dx \\
&\leq \int_{U_t} \varphi \frac{|\nabla \psi|}{\psi} \phi \, dx
\leq \frac{1}{t} \int_{U_t} \varphi |\nabla \psi| \phi \, dx
\leq \frac{1}{t} \|\varphi\|_{p'} \|\nabla \psi\|_p
\end{align*}
for all $t>0$. Since $p > 1$, we obtain
\begin{align}\label{Gt at 0}
    \lim_{t \to 0^{+}} G(t) = \lim_{t \to 0^{+}} t^p F(t) \leq \lim_{t \to 0^{+}} t^{p-1} \|\varphi\|_{p'} \|\nabla \psi\|_p = 0.
\end{align}
Since $G(t)$ is nonincreasing, the limit $A:=\lim\limits_{t\rightarrow +\infty}G(t)\in [-\infty, 0]$ exists. From the definition of $G$, 
\begin{align*}
    t^{p}\int_{U_{t}}\varphi \frac{|\nabla\psi|}{\psi}\phi \, dx=G(t)+t^{p}\int_{U_{t}}\frac{|\nabla \psi|^{p}}{\psi^{p}}\phi \, dx.
\end{align*}
Letting $t\rightarrow +\infty$, we note that
\begin{align*}
    \lim_{t\rightarrow +\infty} t^{p}\int_{U_{t}}\varphi \frac{|\nabla\psi|}{\psi}\phi dx&= \lim_{t\rightarrow +\infty}G(t)+ \lim_{t\rightarrow +\infty}t^{p}\int_{U_{t}}\frac{|\nabla \psi|^{p}}{\psi^{p}}\phi dx.
\end{align*}
Since 
\begin{align*}
   0\leq  t^{p}\int_{U_{t}}\frac{|\nabla \psi|^{p}}{\psi^{p}}\phi dx\leq \int_{U_{t}}|\nabla \psi|^{p}\phi dx\rightarrow 0
\end{align*}
as $t\rightarrow +\infty$, we have 
\begin{equation*}
\lim\limits_{t\rightarrow +\infty}t^{p}\int_{U_{t}}\frac{|\nabla \psi|^{p}}{\psi^{p}}\phi dx= 0.
\end{equation*}
Therefore, 
\begin{align}
   0\le  \lim_{t\rightarrow +\infty}t^{p}\int_{U_{t}}\varphi\frac{|\nabla\psi|}{\psi}\phi dx=\lim_{t\rightarrow +\infty} G(t)\le 0.
\end{align}
Thus $\lim\limits_{t\rightarrow +\infty}G(t)=0$.  Since 
$G$ is nonincreasing, we have $G(t)\ge 0$ for every $t>0$. On  the other hand, the preceding estimate yields
$
\lim\limits_{t\to 0^+} G(t)\le 0,
$
hence $
\lim\limits_{t\to 0^+} G(t)= 0.
$
Thus we must have
 $G(t)=0$ for all $t>0$. 

 On the other hand, by assumption $\varphi \neq |\nabla \psi|^{p-1}/\psi^{p-1}$ on a set of positive measure  in $\Omega$. Since the level sets $U_t$ exhaust $\Omega$, there exists some $t_0>0$ such that
 \begin{align*}
     \gamma (U_t\cap \{x\in \Omega: \varphi(x)\neq \frac{|\nabla \psi|^{p-1}}{\psi^{p-1}}\})>0
 \end{align*}
 for all $0<t<t_0$. In this region, the inequality in \eqref{ineqaulity about lambda} is strict, hence the second inequality in \eqref{4.4} is strict, so that $G'(t) < 0$ for almost every $t$ in a 
 neighborhood of $0$, contradicting with $G\equiv 0$.  Hence the desired set 
$T$ of positive measure must exist.

\end{proof}

\section{Proof of Theorem \ref{thm1}}\label{sect5}
 For  Lebesgue measurable set $E\subset \R^n$, its Gaussian measure and Gaussian perimeter are respectively defined by
\begin{align*}
\gamma(E)=\int_{\Omega}\phi (x)\,dx, \quad P_{\phi }(E)=\int_{\partial^* E}\phi (x)\,d\mathcal{H}^{n-1},
\end{align*}
where $\phi (x)$ is the Gaussian density given in  \eqref{1.2}, $\p^* E$ is  reduced boundary of $E$, and  $d\mathcal{H}^{n-1}$ denotes the $( n-1)$-dimensional Hausdorff measure. For sufficiently regular sets, $\partial^* E $ agrees with $\partial E$ up to an  $\mathcal H^{n-1}$-null set, and  $d\mathcal H^{n-1}$ coincides with the usual surface measure $dA$.

The classical Gaussian isoperimetric inequality\cite{Borell,Sudakov74,Ehrhard84} asserts that among all Lebesgue measurable subsets of $\mathbb{R}^n$ with  fixed Gaussian measure, half-spaces minimize the Gaussian perimeter.  Moreover, the minimizer is unique up to rotation about the origin \cite{Carlen01,Cianchi11}. For $a\in\mathbb{R}$, define the half-space
\begin{align*}
S_a:=\{x=(x_1,x_2,\dots,x_n)\in\mathbb{R}^n:x_1<a\}.
\end{align*}
For a measurable set 
$\Omega$, its Gaussian symmetrization $\Omega^{\#}$ is  the half-space $S_{\sigma^{\#}}$,
where $\sigma^{\#}$ is chosen so that
\begin{align*}
\gamma(\Omega)=\gamma(\Omega^{\#})
=\frac{1}{\sqrt{2\pi}}\int_{-\infty}^{\sigma^{\#}}\exp\left(-\frac{t^2}{2}\right)dt.
\end{align*}
The Gaussian isoperimetric inequality can then be stated as follows.
\begin{theorem}[Gaussian isoperimetric inequality]\label{thmiso}
For any Lebesgue measurable set $\Omega\subset\mathbb{R}^n$,
\begin{align*}
P_{\phi }(\Omega)\ge P_{\phi }(\Omega^{\#}),
\end{align*}
where equality holds if and only if $\Omega$   is equivalent to a half-space up to a set of Gaussian measure zero.
\end{theorem}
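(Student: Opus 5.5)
The statement is the classical Gaussian isoperimetric inequality, which the paper cites from \cite{Borell,Sudakov74,Ehrhard84} and, for the equality case, from \cite{Carlen01,Cianchi11}. If I were to prove it rather than cite it, I would go through Bobkov's functional form and the Ornstein--Uhlenbeck semigroup. Let $\Phi(s)=\int_{-\infty}^s\phi_1$ and define the Gaussian isoperimetric profile $I:=\phi_1\circ\Phi^{-1}$ on $[0,1]$. For a half-space one has $P_\phi(S_{\sigma})=\phi_1(\sigma)$ and $\gamma(S_\sigma)=\Phi(\sigma)$. Hence $P_\phi(\Omega^{\#})=I(\gamma(\Omega))$, and the inequality reduces to
\begin{align*}
P_\phi(\Omega)\ \ge\ I(\gamma(\Omega)).
\end{align*}
A direct computation shows that $I$ satisfies $I\,I''=-1$ and $I(0)=I(1)=0$.

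\textbf{Step 1: Bobkov's inequality for smooth functions.} The first step is to prove, for smooth $f:\R^n\to[0,1]$ with bounded gradient,
\begin{align*}
I\Bigl(\int f\,d\gamma\Bigr)\le \int \sqrt{I(f)^2+|\nabla f|^2}\,d\gamma .
\end{align*}
Let $P_s$ be the Ornstein--Uhlenbeck semigroup, with generator $L=\Delta-x\cdot\nabla$, and set
\begin{align*}
J(s):=\int\sqrt{I(P_sf)^2+|\nabla P_sf|^2}\,d\gamma .
\end{align*}
I would show that $J$ is nonincreasing, using three ingredients: the commutation $\nabla P_s=e^{-s}P_s\nabla$, integration by parts against $\gamma$, and the identity $I I''=-1$. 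Differentiating $J$ produces a quadratic form in $\nabla P_s f$ and $\nabla^2 P_s f$. Its nonpositivity reduces to a pointwise Cauchy--Schwarz inequality, in the same spirit as the Bakry--Ledoux $\Gamma_2$ argument. Since $P_sf\to\int f\,d\gamma$ and $\nabla P_s f\to0$ as $s\to\infty$, monotonicity gives $J(\infty)=I(\int f\,d\gamma)\le J(0)$. I expect this differentiation and the sign check to be the main computational step.

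\textbf{Step 2: from functions to sets.} If $P_\phi(\Omega)=\infty$ there is nothing to prove. Otherwise $\Omega$ has locally finite perimeter, and I would take smooth $f_k\to{\bf 1}_\Omega$ in $L^1(\gamma)$ with values in $[0,1]$ and
\begin{align*}
\int|\nabla f_k|\,d\gamma\to P_\phi(\Omega).
\end{align*}
These are obtained by mollification, using that $\phi$ is continuous and positive. Because $I$ is bounded, continuous and vanishes at $0$ and $1$, dominated convergence gives $\int I(f_k)\,d\gamma\to0$. Applying Step 1 together with $\sqrt{a^2+b^2}\le a+b$ and passing to the limit yields $I(\gamma(\Omega))\le P_\phi(\Omega)$, which is the claimed inequality.

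\textbf{Step 3: equality.} Rigidity is the genuinely hard part. The semigroup argument loses information in the approximation of Step 2, so equality for sets cannot simply be read off from equality in Bobkov's inequality. I would follow the approach of \cite{Carlen01}. In the equality case, the dissipation term in $dJ/ds$ must vanish along the flow applied to ${\bf 1}_\Omega$. This forces $\nabla P_s{\bf 1}_\Omega$ to have a constant direction and the Hessian of $\Phi^{-1}(P_s{\bf 1}_\Omega)$ to vanish. Hence $\Phi^{-1}\circ P_s{\bf 1}_\Omega$ is affine, and letting $s\to0$ shows that $\Omega$ is a half-space up to a $\gamma$-null set. For nonsmooth $\Omega$ I would instead invoke the sharp quantitative version of \cite{Cianchi11}, which directly yields that equality implies $\Omega$ coincides with a half-space up to measure zero. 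I expect the main obstacle to be justifying this rigidity step for merely finite-perimeter sets.
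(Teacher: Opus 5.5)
The paper does not prove this statement. It quotes it as a black box: the inequality from \cite{Borell,Sudakov74,Ehrhard84} and the equality case from \cite{Carlen01,Cianchi11}. It enters only through Proposition~\ref{F geq lambda} and the equality discussion in the proof of Theorem~\ref{thm1}.

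Your proposal takes a genuinely different, self-contained route, and it is correct. You reduce to $P_\phi(\Omega)\ge I(\gamma(\Omega))$, prove Bobkov's inequality by the Bakry--Ledoux semigroup argument, and relax to sets. This replaces the classical arguments behind the citations (Poincar\'e's limit of the spherical isoperimetric inequality, Ehrhard symmetrization). It covers every set of finite Gaussian perimeter at once, and the same monotone quantity then serves for rigidity.

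The computation you anticipate in Step 1 does close. For $g=P_sf$ and $\Lambda=\sqrt{I(g)^2+|\nabla g|^2}$, using $\nabla Lg=L\nabla g-\nabla g$ and $(I^2)''=2(I'^2-1)$, one gets
\[
\Lambda^3\,(L\Lambda-\partial_s\Lambda)=\bigl\|I(g)\nabla^2 g-I'(g)\,\nabla g\otimes\nabla g\bigr\|^2+\bigl(|\nabla g|^2\|\nabla^2 g\|^2-|\nabla^2 g\,\nabla g|^2\bigr)\ \ge 0 .
\]
Since $\int L\Lambda\,d\gamma=0$, this gives $J'\le 0$.

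In Step 3 you overestimate the difficulty, but you should state the link behind ``the dissipation must vanish''.
\begin{itemize}
\item Apply Step 1 to the approximants $f_k$ of Step 2 and let $k\to\infty$ at fixed $s>0$. Mehler's formula gives pointwise convergence of $P_sf_k$ and $\nabla P_sf_k$ with uniform bounds. This yields $J(s)\le P_\phi(\Omega)$ for every $s>0$, where $J$ is now built on ${\bf 1}_\Omega$.
\item $J$ is monotone on $(0,\infty)$ and $J(\infty)=I(\gamma(\Omega))$. So equality $P_\phi(\Omega)=I(\gamma(\Omega))$ forces $J$ to be constant, and the nonnegative integrand above vanishes identically for each $s>0$.
\item Its first term equals $I(g)^4\|\nabla^2\Phi^{-1}(g)\|^2$. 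This is exactly your statement that $\Phi^{-1}(P_s{\bf 1}_\Omega)$ is affine, and the $L^1(\gamma)$ limit $s\to 0$ then forces $\Omega$ to be a half-space.
\end{itemize}
Since $P_s{\bf 1}_\Omega$ is smooth for $s>0$ whatever $\Omega$ is, no regularity of $\Omega$ enters. This is the argument of \cite{Carlen01}, and your fallback to \cite{Cianchi11} for nonsmooth $\Omega$ is unnecessary.

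A small technical point in Step 2: plain global mollification of ${\bf 1}_\Omega$ need not give $\int|\nabla f_k|\,d\gamma\to P_\phi(\Omega)$ when $\Omega$ is unbounded. The reason is that $\phi(x-y)/\phi(x)=e^{x\cdot y-|y|^2/2}$ is not uniformly close to $1$. Either truncate first to $\Omega\cap B_R$ with $\int_{\Omega\cap\partial B_R}\phi\,d\mathcal{H}^{n-1}$ small (possible along a sequence $R\to\infty$ by the coarea formula), or mollify locally through a partition of unity.
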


Let $\psi$ be the first positive eigenfunction of \eqref{1.2} on $\Omega$, and let $\psi_*$ be the positive first eigenfunction for the symmetrized half-space and $\Omega^{\#}=S_{\sigma^{\#}}$. Recalling that $\psi_*$ depends only on the first variable, we write it as
$\psi_*(x_{1},\cdots,x_{n})=\psi_{*}(r)$. We now construct a suitable comparison function  on $\Omega$ by rearranging  the quantity $|\nabla \psi_*|^{p-1}/\psi_*^{p-1}$. Define
\begin{align*}
h(r) := \varphi_*(r) := \frac{|\psi'_*(r)|^{p-1}}{\psi_*(r)^{p-1}}, \quad r \in (-\infty,\sigma^\#].
\end{align*}
For each $t>0$, let $r(t)$ be uniquely determined by the identity 
 $$
 \gamma(U_t)=\gamma(S_{r(t)}), \quad U_t:=\{x\in \Omega: \psi(x)> t
 \}.
 $$
  We then define $\varphi: \Omega\to [0, \infty)$ by setting, for $x\in \Omega$ with  $\psi(x) = t$,
\begin{align*}
\varphi(x) := h(r(t)).
\end{align*}

\begin{lemma}
    The function $\varphi: \Omega\rightarrow \mathbb{R}$ defined above is measurable and  satisfies  $0\leq \varphi(x)\le \beta$ for all $x\in \Omega$.
\end{lemma}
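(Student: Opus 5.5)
The plan is to write $\varphi$ as a composition $\varphi = h\circ r\circ \psi$ and check measurability and the bounds one factor at a time.

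\emph{The function $t\mapsto r(t)$.} The distribution function $\mu(t):=\gamma(U_t)$ is nonincreasing and right-continuous on $(0,\infty)$. It takes values in $[0,\gamma(\Omega)]$, and $\gamma(\Omega)<1$ because $\Omega\in\mathcal G$. Moreover, $r\mapsto \gamma(S_r)=\int_{-\infty}^r\phi_1\,dt$ is a continuous, strictly increasing bijection from $[-\infty,\infty]$ onto $[0,1]$. Hence
\begin{align*}
r(t)=\Phi_1^{-1}(\mu(t)), \qquad \Phi_1(s):=\int_{-\infty}^s\phi_1\,d\tau,
\end{align*}
is well defined, nonincreasing in $t$, and Borel, as a monotone function. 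It satisfies $r(t)\le \sigma^{\#}$ because $\mu(t)\le\gamma(\Omega)=\gamma(S_{\sigma^\#})$. If $\mu(t)=0$, then $r(t)=-\infty$; for such $t$ we set $h(-\infty):=\lim_{r\to-\infty}h(r)$, which exists by monotonicity, or we simply assign the value $0$. Either choice affects only a $\gamma$-null set, since $\gamma(\{\psi>t\})=0$ there.

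\emph{The function $h$.} By Proposition~\ref{Theorem2} and Remark~\ref{remark1}, $\psi_*$ is smooth, positive and strictly decreasing on $(-\infty,\sigma^\#)$, with $\psi_*'<0$. Therefore $h=|\psi_*'|^{p-1}/\psi_*^{p-1}$ is exactly the function $g$ of Proposition~\ref{monotone of beta prop} with $\sigma=\sigma^\#$. That proposition says $h$ is positive and strictly increasing on $(-\infty,\sigma^\#)$, with $h(\sigma^\#)=\beta$, the Robin condition in \eqref{3.5}. Consequently, for every $r\le\sigma^\#$,
\begin{align*}
0\le h(r)\le h(\sigma^\#)=\beta .
\end{align*}
In particular $h$ is continuous on $(-\infty,\sigma^\#]$ and monotone, hence Borel.

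\emph{Conclusion.} We have $\varphi(x)=h(r(\psi(x)))$. Here $\psi$ is continuous on $\Omega$ by Lemma~\ref{lm2.1}, and $h\circ r$ is a composition of Borel functions of one variable, in fact a monotone (nonincreasing) function of $t$. So $\varphi$ is Borel measurable on $\Omega$. The bound $0\le\varphi\le\beta$ follows at once from the inequality for $h$ together with $r(t)\le\sigma^\#$.

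The only real subtlety is the well-definedness of $r(t)$ at the extremes, namely $\mu(t)=0$ or jumps of $\mu$. Monotonicity of $\mu$ and the bijectivity of $\Phi_1$ handle this, and the result is that $\varphi$ is determined up to a $\gamma$-null set. The substantive input is the bound by $\beta$, and that comes directly from Proposition~\ref{monotone of beta prop}.
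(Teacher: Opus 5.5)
Your proof is correct and follows the paper's argument: write $\varphi=h\circ r\circ\psi$, get measurability from the monotonicity of $r$ and the continuity of $\psi$, and get the bound from Proposition~\ref{monotone of beta prop} together with $r(t)\le\sigma^{\#}$. Your points about Borel composition and the case $\gamma(U_t)=0$ are extra care the paper omits, though one remark is slightly off: the set where the convention matters is $\{\psi=\sup_\Omega\psi\}$, not $\{\psi>t\}$, so $\gamma(\{\psi>t\})=0$ does not show it is null. This is harmless, because either convention keeps $h\circ r$ monotone and $\varphi$ valued in $[0,\beta]$ everywhere.
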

\begin{proof}
Since $\psi$ is continuous on $\overline{\Omega}$ and differentiable in $\Omega$ (by Lemma \ref{lm2.1}), the level sets $U_t$ are measurable, and  $r(t)$ is monotone and hence measurable. Thus $\varphi(x)$ is measurable as a composition of measurable functions. 
By Proposition \ref{monotone of beta prop}, 
$h(r)$
is strictly increasing on $(-\infty, \sigma^\#]$ and satisfies
$h(\sigma^\#)=\beta$. Since $r(t)\le \sigma^\#$ for $t>0$, we obtain
\begin{align*}
    0\le \varphi(x)=h(r(t))\le h(\sigma^\#)=\beta
\end{align*}
for all $x\in \Omega$. The proof is complete.
\end{proof}
As in \cite[Lemma 5.2]{BucurCV}, we recall the following standard inequality.
\begin{lemma}\label{lm5.2}
    There exists an at most countable exceptional set $Q\subset (0,\infty)$ such that 
    \begin{align*}
            P_{\phi }(\partial U_{t})\leq P_{\phi }(\partial U_{t}\cap \Omega)+P_{\phi }(\partial U_{t}\cap \p \Omega)
    \end{align*}
    for all $t\in (0,\infty)\setminus Q$.
\end{lemma}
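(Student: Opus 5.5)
The plan is to compare the reduced boundary of $U_t$ with its topological boundary, and then identify the two pieces of the topological boundary with the sets $\partial U_t^{\mathrm{int}}$ and $\partial U_t^{\mathrm{ext}}$ that enter $\mathcal F_\Omega$. The set $Q$ will consist of the levels $t$ at which a level set $\{\psi=t\}$ carries positive mass.

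\emph{Step 1 (topology).} By Lemma \ref{lm2.1}, $\psi\in C(\overline\Omega)$. Hence $U_t$ is relatively open in $\Omega$, and therefore open in $\R^n$. Its closure lies in $\{x\in\overline\Omega:\psi(x)\ge t\}$, so
$$\partial U_t\subset \{x\in\Omega:\psi(x)=t\}\cup\{x\in\partial\Omega:\psi(x)\ge t\}.$$

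\emph{Step 2 (the exceptional set).} The sets $\{x\in\partial\Omega:\psi(x)=t\}$, $t>0$, are pairwise disjoint. Since $\partial\Omega$ is Lipschitz, the measure $\phi\,dA$ on $\partial\Omega$ is $\sigma$-finite. Therefore at most countably many $t$ give such a set positive $\phi\,dA$-measure. We let $Q$ be the set of these $t$. I would also add the countably many $t$ with $\gamma(\{\psi=t\})>0$, to be safe when matching with the coarea terms used later. For $t\notin Q$, the set $\partial U_t\cap\partial\Omega$ then coincides, up to an $\mathcal H^{n-1}$-null set, with $\{x\in\partial\Omega:\psi>t\}=\partial U_t^{\mathrm{ext}}$. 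Likewise, $\partial U_t\cap\Omega\subset\partial U_t^{\mathrm{int}}$.

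\emph{Step 3 (perimeter versus Hausdorff measure).} If the right-hand side is infinite there is nothing to prove, so assume it is finite. I would invoke Federer's criterion: a set whose topological boundary has locally finite $\mathcal H^{n-1}$-measure has locally finite perimeter. Moreover, $\partial^*U_t\subset\partial U_t$. Applying this locally on balls, where $\phi$ is bounded above and below, and then summing with the weight $\phi$, we get
$$P_\phi(U_t)=\int_{\partial^*U_t}\phi\,d\mathcal H^{n-1}\le\int_{\partial U_t\cap\Omega}\phi\,d\mathcal H^{n-1}+\int_{\partial U_t\cap\partial\Omega}\phi\,d\mathcal H^{n-1}.$$
This is exactly the stated inequality, and by Step 2 it can be read as the bound by the Gaussian measures of $\partial U_t^{\mathrm{int}}$ and $\partial U_t^{\mathrm{ext}}$.

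The main obstacle I expect is the finiteness and perimeter issue in Step 3 on unbounded $\Omega$. Here one must use that, for a.e.\ $t$, $\int_{\{\psi=t\}}\phi\,dA<\infty$. This follows by the coarea formula from $\psi\in W^{1,p}(\Omega,\phi)$, after possibly enlarging $Q$ by a null set, which is harmless for the later argument. One must also handle the localization of Federer's criterion with the Gaussian weight. The countability in Step 2 is elementary.
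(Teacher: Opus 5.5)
Your proof is correct and follows the standard Bucur--Daners argument that the paper cites without reproducing. Continuity of $\psi$ on $\overline\Omega$ gives $\partial U_t\subset\partial U_t^{\mathrm{int}}\cup\partial U_t^{\mathrm{ext}}\cup\{x\in\partial\Omega:\psi(x)=t\}$; at most countably many of the disjoint sets $\{\psi=t\}\cap\partial\Omega$ can charge the $\sigma$-finite measure $\phi\,dA$; and Federer's criterion, localized on balls, gives $P_\phi(U_t)\le\int_{\partial U_t}\phi\,d\mathcal H^{n-1}$. Drop only your closing remark about enlarging $Q$ by a Lebesgue-null set: it is unnecessary, since an infinite right-hand side is already trivial in Step 3, and it would destroy the countability of $Q$.
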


\begin{proposition}\label{F geq lambda}
    Let $\varphi$ be  defined as above. Then
\begin{align*}
\mathcal{F}_{\Omega}(U_t, \varphi) \geq \mathcal{F}_{S_{\sigma^\#}}(S_{r(t)}, \varphi_*)=\l_1(\Omega^\#) 
\end{align*}
for all $t \in (0,+\infty)\setminus Q$, where $Q$ is the exceptional set from Lemma \ref{lm5.2}. Moreover, if equality holds, then $U_t$ is a half-space up to a set of Gaussian measure zero.
\end{proposition}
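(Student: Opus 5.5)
We compare the three terms of $\mathcal{F}_\Omega(U_t,\varphi)$ with the corresponding terms of $\mathcal{F}_{S_{\sigma^\#}}(S_{r(t)},\varphi_*)$. By construction $\gamma(U_t)=\gamma(S_{r(t)})$, so the prefactors agree. The equality $\mathcal{F}_{S_{\sigma^\#}}(S_{r(t)},\varphi_*)=\lambda_1(\Omega^\#)$ follows from Proposition \ref{prop4.1} applied on $\Omega^\#$. Indeed, for the half-space, $\psi_*$ is strictly decreasing in $x_1$ by Proposition \ref{Theorem2}, so every superlevel set $\{\psi_*>s\}$ is exactly some $S_r$. On the hyperplane $\{x_1=r\}$ we have $|\nabla\psi_*|^{p-1}/\psi_*^{p-1}=h(r)$, and the external boundary part is empty for $r<\sigma^\#$. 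By continuity in $r$, which is clear from the explicit one-dimensional expression, the identity holds for every $r$, not only almost every $r$.

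\emph{Volume term.} We show that $\int_{U_t}\varphi^{p'}\phi\,dx=\int_{S_{r(t)}}h(x_1)^{p'}\phi\,dx$. The map $x\mapsto r(\psi(x))$ pushes $\gamma|_{U_t}$ forward to the law of $x_1$ under $\gamma|_{S_{r(t)}}$, because $\gamma(\{x\in U_t:\ r(\psi(x))<r\})=\gamma(U_{s})$ with $r(s)=r$, that is, it equals $\gamma(S_r)$. Since $t\mapsto r(t)$ is nonincreasing, one must take care at levels where $\gamma(\{\psi=s\})>0$; there $r$ jumps. I expect this to be handled by noting that $\nabla\psi=0$ a.e. on such plateaus, or by working with the distribution function directly. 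So $\varphi$ is equimeasurable with $h(x_1)$ on the respective sets, and the volume terms coincide.

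\emph{Boundary terms.} On $\partial U_t^{\mathrm{int}}=\{\psi=t\}$ we have $\varphi\equiv h(r(t))$, and $h(r(t))\le\beta$ by the previous lemma. Therefore
\begin{align*}
\int_{\partial U_t^{\mathrm{int}}}\varphi\phi\,dA+\beta\int_{\partial U_t^{\mathrm{ext}}}\phi\,dA
\ge h(r(t))\Big(P_\phi(\partial U_t\cap\Omega)+P_\phi(\partial U_t\cap\partial\Omega)\Big)
\ge h(r(t))\,P_\phi(U_t)
\end{align*}
for $t\notin Q$, by Lemma \ref{lm5.2}. The Gaussian isoperimetric inequality, Theorem \ref{thmiso}, gives $P_\phi(U_t)\ge P_\phi(S_{r(t)})$. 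Hence the boundary contribution is at least $h(r(t))P_\phi(S_{r(t)})$. This equals exactly the boundary term of the half-space functional, $\int_{\{x_1=r(t)\}}\varphi_*\phi\,dA$, with the $\beta$-term absent when $r(t)<\sigma^\#$. When $r(t)=\sigma^\#$, that is, for small $t$ with $U_t=\Omega$ up to a null set, $h=\beta$ and the same computation applies.

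Combining the three comparisons gives $\mathcal{F}_\Omega(U_t,\varphi)\ge\mathcal{F}_{S_{\sigma^\#}}(S_{r(t)},\varphi_*)$. If equality holds, then equality holds in the isoperimetric step whenever $h(r(t))>0$, which is true since $h>0$ by Proposition \ref{monotone of beta prop}. The rigidity part of Theorem \ref{thmiso} then shows that $U_t$ is a half-space up to a null set.

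The main obstacle will be the rearrangement step for the volume term. It requires that $\varphi$ and $h(x_1)$ have the same distribution, and this needs care at plateaus of $\psi$ and with the measurability and monotonicity of $r(t)$. I would first try to prove, using the coarea formula, that $\gamma(\{\psi=s\}\cap\{\nabla\psi\ne0\})=0$, and handle the remaining part separately.
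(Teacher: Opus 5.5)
Your proposal is correct and follows essentially the same route as the paper. The paper likewise uses $\varphi\equiv h(r(t))$ on the level set, $h\le\beta$, Lemma \ref{lm5.2} and the Gaussian isoperimetric inequality for the boundary terms; it cites Maz'ya's equimeasurability for the volume term and reads off $\mathcal{F}_{S_{\sigma^\#}}(S_{r(t)},\varphi_*)=\lambda_1(\Omega^\#)$ from the proof of Proposition \ref{prop4.1}.

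Your worry about plateaus of $\psi$ is harmless. On a plateau $\{\psi=s\}$, $\varphi$ takes the value $h(r(s))$, which is the infimum of $h$ over the corresponding slab. This gives $\int_{U_t}\varphi^{p'}\phi\,dx\le\int_{S_{r(t)}}\varphi_*^{p'}\phi\,dx$, which is exactly the direction the minus sign in $\mathcal{F}_\Omega$ needs. Equality in $\mathcal{F}$ still forces equality in the isoperimetric step, since $h(r(t))>0$.
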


\begin{proof}
Since $\varphi(x) = h(r(t))$ on $\partial U_{t}\cap \Omega$ by construction, the isoperimetric inequality (Theorem \ref{thmiso}) and Lemma \ref{lm5.2} yields
\begin{align}\label{5.1}
\int_{\partial S_{r(t)}} \varphi_* \phi dA &= h(r(t)) P_{\phi }(\partial S_{r(t)})\nonumber\\
&\leq h(r(t)) P_{\phi }(\partial U_{t}) \nonumber\\
&\leq h(r(t)) P_{\phi }(\partial U_{t}\cap \Omega) + h(r(t)) P_{\phi }(\partial U_{t}\cap \p \Omega) \nonumber\\
&\leq \int_{\partial  U_{t}\cap \Omega} \varphi \phi\, dA + \int_{\partial  U_{t}\cap \p \Omega} \beta \phi\, dA
\end{align}
for all $t \in (0,\infty)\setminus Q$, where the last inequality follows from $\varphi \leq \beta$. Equality clearly holds if $U_t$ is a half-space and $P_{\phi }(\partial U_{t}\cap \p \Omega)=0$. Moreover, equality in the above implies that  $U_t$ is a half-space up to a set of Gaussian measure zero,  by the equality case in the Gaussian isoperimetric inequality.
 
 
By the definition $r(t)$, we have $\gamma(U_t) = \gamma(S_{r(t)})$ for all $t>0$. Consequently, by \cite{Mazja} (see sec 1.2.3)
\begin{align}\label{5.2}
\int_{U_t} \varphi^{p/(p-1)} \phi \, dx = \int_{S_{r(t)}} \varphi_*^{p/(p-1)} \phi \, dx.
\end{align}
Combining \eqref{5.1}, \eqref{5.2} and the definition of  $\mathcal{F}_\Omega$, we obtain
$$
\mathcal{F}_{\Omega}(U_t, \varphi) \geq \mathcal{F}_{S_{\sigma^\#}}(S_{r(t)}, \varphi_*),
$$
It follows directly from the proof of Proposition  \ref{prop4.1} that 
$$
\mathcal{F}_{S_{\sigma^\#}}(S_{r(t)}, \varphi_*)=\l_1(\Omega^\#)
$$
for each $t>0$. Hence we conclude that
$$
\mathcal{F}_{\Omega}(U_t, \varphi) \geq \mathcal{F}_{S_{\sigma^\#}}(S_{r(t)}, \varphi_*)=\l_1(\Omega^\#), 
$$
which completes the proof.
\end{proof}
We are now ready to prove the main theorem.
\begin{proof}[Proof of Theorem \ref{thm1}]
If $ \phi =\frac{|\nabla\psi|^{p-1}}{\psi^{p-1}}$ a.e.  in $\Omega$, then Proposition \ref{prop4.1}  together with Proposition \ref{F geq lambda} immediately yields the desired inequality. We now assume
 $ \phi \neq \frac{|\nabla\psi|^{p-1}}{\psi^{p-1}} $
 on a set of positive measure in $\Omega$.
By Proposition \ref{theorem3},  there exists a set  $T\subset (0,+\infty)$ of positive Lebesgue measure such that for every $t\in T$,
    \begin{align*}
        \lambda_{1}(\Omega)\geq \mathcal{F}_\Omega(U_t, \varphi).
    \end{align*}
On the other hand, by Proposition \ref{F geq lambda},  for all $t\in T\setminus Q$,
    \begin{align*}
     \mathcal{F}_\Omega(U_t, \varphi)\geq \mathcal{F}_{S_{\sigma^\#}}(S_{r(t)}, \varphi_*) = \l_1(\Omega^\#).
    \end{align*}
This proves the Faber-Krahn inequality.

It remains to discuss the equality case.  Suppose now that equality holds, i.e., $ \lambda_1(\Omega)=\lambda_1(\Omega^\#)$.
By Propositions \ref{prop4.2} and \ref{F geq lambda}, for almost every \(t>0\),
$$ \lambda_1(\Omega) \le F_\Omega(U_t,\phi) \le \lambda_1(\Omega) -\frac{G'(t)}{\gamma(U_t)t^{p-1}}, $$
yields \(G'(t)\le0\). Following the proof of Proposition  \ref{theorem3},  we conclude that  $G\equiv0$. Hence
$\mathcal F_\Omega(U_t,\phi)=\lambda_1(\Omega) $
for almost every \(t>0\). Consequently, equality must hold in the Gaussian isoperimetric inequality used in Proposition \ref{F geq lambda}, and therefore  $U_t$ is a half-space for almost every $t>0$.

Choose a sequence $t_k\downarrow 0$ such that each $U_{t_k}$ is a half-space. Since $U_{t_k}\subset U_{t_{k+1}}$, their boundary hyperplanes are parallel. Indeed, two proper half-spaces with nonparallel boundary hyperplanes cannot be contained one in the other.  Thus, there exist $a_k\in \R$ and $\nu\in\mathbb S^{n-1}$ such that
$$ U_{t_k}=\{x\in \R^n: x\cdot\nu<a_k\}. $$
Since \(\psi>0\) in \(\Omega\), we have
$$ \Omega=\bigcup_k U_{t_k} =\{x\cdot\nu<\sup_k a_k\} $$
up to a set of Gaussian measure zero. Hence \(\Omega\) is a half-space, completing the proof.

\end{proof}
 \bibliographystyle{plain}
	\bibliography{ref}
\end{document}